\DeclareMathOperator{\sol}{div}						 
\DeclareMathOperator{\curl}{curl}
\newcommand{\norm}[1]{\left\|#1\right\|}   
\newcommand{\derivuj}[2]{\frac{\partial{#1}}{\partial{{#2}}}}
\newcommand{\abs}[1]{\left\lvert#1\right\rvert}      
\newcommand{\field}[1]{\mathbb{#1}}
\def\det{\partial_t}
\def\R{\field{R}}      
\def\N{\field{N}} 
\def\T{\field{T}} 
\def\mfi{\varphi}
\def\S{\field{S}} 
\def\B{\mathcal{B}}
\def\Te{\mathcal{T}}
\newcommand{\rntou}[1]{\R^{#1}}    
\def\carka{\sp{\prime}}
\def\carky{\sp{\prime\prime}}
\def\rtri{\rntou{3}}
\def\Id{\field{I}} 
\def\epsil{\varepsilon}
\newcommand{\vektor}[1]{{\mathbf{#1}}}
\def\ro{\varrho}
\def\en{\vektor{n}}
\def\bfi{\boldsymbol\mfi}
\def\Phib{\boldsymbol\Phi}
\def\ef{\vektor{F}}
\def\A{\vektor{A}}
\def\Ha{\vektor{H}}
\def\u{\vektor{v}}
\newcommand{\de}[1]{\mathrm{d}{#1}}     
\def\dx{\de{x}}
\newcommand{\inte}[1]{\int\limits_{\Omega}{#1}\dx  }
\newcommand{\intek}[1]{\int\limits_{\Omega_K}{#1}\dx  }
\newcommand{\refx}[1]{(\ref{#1})}
\def\qqquad{\quad\quad\quad}
\def\weak{\rightharpoonup}
\def\weaks{\rightharpoonup ^{*}}
\def\sob{W^{1,2}(\Omega)}
\def\ksi{\zeta}
\theoremstyle{plain}
\newtheorem{thm}{Theorem}
\newtheorem{lem}{Lemma}
\newtheorem{prop}{Proposition}
\newtheorem*{LSFPT}{Schauder fixed point theorem}
\theoremstyle{definition}
\newtheorem{defin}{Definition}
\newtheorem{note}{Remark}
\newcommand{\lnorm}[2]{\norm{#1}_{L^{#2}(\Omega)}}
\newcommand{\lknorm}[2]{\norm{#1}_{L^{#2}(\Omega_K)}}
\newcommand{\lnorma}[2]{\norm{#1}_{L^{#2}}}
\title{Decently regular steady solutions 
 to the compressible NSAC system}
\author[1,2]{\v{S}imon Axmann\thanks{axmas7am@karlin.mff.cuni.cz}}
\author[3]{Piotr Bogus\l{}aw Mucha\thanks{p.mucha@mimuw.edu.pl; Corresponding author}\small}
\affil[1]{Mathematical Institute of Charles University, Prague}
\affil[2]{Department of Mathematics, University of Chemistry and Technology, Prague}
\affil[3]{Institute of Applied Mathematics and Mechanics, University of Warsaw}
\begin{document}
\maketitle

{\bf Abstract.} We aim at proving existence of weak solutions to the stationary compressible Navier-Stokes system coupled with
the Allen-Cahn equation. The model is studied in a bounded three dimensional domain with slip boundary conditions for the momentum equations and
the Neumann condition for the Allen-Cahn model. The main result establishes existence of weak solutions with bounded densities. The construction 
is possible assuming sufficiently large value of the heat capacity ratio $\gamma$ ($p \sim \ro^\gamma$). As a corollary we obtain weak solutions for a less restrictive case losing 
point-wise boundedness of the density. 

\section{Introduction}

Phase transition phenomena are important subject of applied mathematics. Complexity of physical nature of these processes makes us to choose different models in 
order to obtain the best description in a concrete studied case. Here we want to concentrate our attention on phenomena with fuzzy phase interfaces. We  think
about the process of melting or freezing, at the level of almost constant critical temperature. We want to control densities of phase constituents, and here we arrive at 
the Allen-Cahn equation:
\begin{equation}
 \ro \derivuj{f}{c}(\ro,c) - \Delta c =\ro \mu.
\end{equation}
The equation above gives us control of one of the phase constituents in terms of the chemical potential $\mu$. In the chosen model this equation is coupled with the compressible 
Navier-Stokes system, which represents a well established and frequently studied model describing the flows of viscous compressible single constituted fluids.

The mathematical analysis of coupled systems: the compressible Navier-Stokes type and the phase separation
 is in its infancy \cite{AbFe08,Feetal10,Kots12,KoZa09,DiLiLu13, DiLi13}, although the mathematical theory of each of them separately is quite developed 
(see e.g. \cite{Feir04}, \cite{GrPeSc06,Sc00}). In this article, we study existence  of steady weak solutions.
 The thermodynamically consistent derivation of the model under consideration, which is a variant of a model proposed by Blesgen \cite{Bles99}, was presented by Heida, M\'{a}lek and Rajagopal 
in \cite{HeMaRa12}. It is represented by the following system of 
partial differential equations for three unknowns: density  of the fluid $\ro$, velocity field $\u$, and concentration of one selected constituent $c$
\begin{gather}
     \sol(\ro \u) = 0, \label{RK} \\ 
	 \sol(\ro \u\otimes\u) =\sol \T+\ro\ef  , \\
     \sol(\ro c \u) =  - \mu, \label{third} \\
     \ro \mu = \ro \derivuj{f}{c}(\ro,c) - \Delta c , \label{system}
\end{gather} \def\stat{\refx{RK}-\refx{system}}
 where the stress tensor $\T$ is given by $$\T(\nabla\u,\nabla c,\ro,c) = \S(\nabla\u) -\bigl(\nabla c \otimes \nabla c -\frac{\abs{\nabla c}^2}{2}\Id\bigr)- p(\ro,c) \Id ,$$
 with the thermodynamical pressure $p(\ro,c) =\ro^2 \derivuj{f}{\ro} $ and the viscous stress $\S$ satisfying the Stokes law for Newtonian fluid 
 $$\S(\nabla \u) = \nu\left(\nabla\u +\nabla^T\u - \sol\u\Id \right)+ \eta\sol\u\Id, $$ 
  with the shear viscosity coefficient $\nu>0$, and the bulk viscosity coefficient $\eta\geq 0 $.
The free energy is assumed in the form with the so-called logarithmic potential\footnote{We could assume also more general pressure law function similarly to \cite{singular}, but it would lead only to
 additional unnecessary technicalities, so we omit it. Similarly, we can consider more general singular functions $L(c).$} 
$$	f(\ro,c) = \frac{1}{\gamma-1} \ro^{\gamma-1} +\bigl( a_1 c +a_2 (1-c)\bigr)\log \ro +c\log c + (1-c) \log(1-c) + b(c) $$
 with some $\gamma>1$\footnote{The values of $\gamma$ which ensure the existence result will be specified later.}, $a_1,a_2\geq0$ and $b$ a smooth bounded function 
with $\abs{b\carka(c)}\leq C$. Moreover we assume without loss of generality that $a_{1}\geq a_{2},$ and we denote for the sake of simplicity $a=a_1-a_2$, $d=a_1$, and $L(c) =c\log c + (1-c) \log(1-c).$  The  logarithmic terms related to 
the entropy of the system assure that $c\in[0,1]$ almost everywhere, since
\begin{align*}	\derivuj{f}{c}(\ro,c)=&  \log c - \log(1-c)  + (a_1-a_2)\log\ro +b\carka(c)\\
           =&L\carka (c)+ a\log\ro+b\carka(c),\\
	p(\ro,c) =& \ro^\gamma + \ro \bigl(ac+d\bigr).
\end{align*}
 
The fluid is contained in a smooth bounded domain $\Omega$, we supply the equations in the domain with boundary conditions
\begin{gather}
\u\cdot\en =0\text{ at }\partial\Omega,\label{imper}\\
\en\cdot \T(c,\nabla\u)\cdot\boldsymbol{\tau}_n+k\u\cdot\boldsymbol{\tau}_n =0\text{ at }\partial\Omega,\\
\nabla c\cdot\en =0\text{ at }\partial\Omega \label{tokh},
\end{gather}
where parameter $k> 0$ represents the friction on the boundary\footnote{For slip boundary condition corresponding to the case $k=0$, we need to assume that the domain $\Omega$ is not axially symmetric.},
 $\boldsymbol{\tau }_n,\:n=1,2$ are two linearly independent tangent vectors to $\partial\Omega$, and $\en$ denotes the normal vector.
 
 The solutions are parametrized by means of the condition
\begin{equation}
\inte{\ro} = M.\label{conserv}
\end{equation}   
The fluid is driven by an external force $\ef\in L^{\infty}(\Omega,\rtri)$.

The main result of the paper is the following.

\smallskip

\begin{thm}\label{main}
 Let $\gamma > 6$, $M>0$ and $\ef\in L^{\infty}(\Omega,\rtri)$. Then there exists at least one weak solution to the system {\stat} such that $c\in[0,1]$ in $\Omega$
\begin{equation}
 \ro \in L^\infty(\Omega), \quad \u \in W^{1,p}(\Omega,\rtri)  \mbox{ and } c \in W^{2,p}(\Omega) \mbox{ for } p< \infty.
\end{equation}
In addition, if we assume only $\gamma > 3$, then there exists at least one weak solution such that
\begin{equation}
  \ro\in L^{3\gamma-6}(\Omega), \quad\u\in W^{1,2}(\Omega), \quad \nabla c \in L^{\frac{6\gamma-12}{\gamma}}(\Omega).
\end{equation}
The meaning of the solutions is explained in Definition \ref{def}, below.
\end{thm}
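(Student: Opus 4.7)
The plan follows the standard compressible Navier--Stokes approximation paradigm adapted to the Allen--Cahn coupling, with an extra step required to promote the density to $L^\infty$ when $\gamma$ is large. As a first step I would introduce a regularised system by adding an artificial viscosity of the form $\epsilon(\ro - M/|\Omega|) - \epsilon\Delta\ro$ to the continuity equation \refx{RK} under homogeneous Neumann data, and an auxiliary higher-order pressure $\delta\ro^\beta$ (with $\beta$ large) to the momentum equation. The regularised problem would be solved by a Schauder fixed-point scheme on $\ro$: for prescribed $\ro$ one solves the Allen--Cahn equation \refx{system} for $c\in[0,1]$ using the maximal monotonicity of $L\carka$, then solves the momentum equation by a Galerkin approximation in $\u$, and finally updates $\ro$ through the regularised continuity equation, which defines a compact self-map.

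The basic bounds come from the energy identity. Testing the momentum equation with $\u$, using the algebraic identity $\sol(\nabla c\otimes\nabla c -\tfrac12|\nabla c|^2\Id)=\Delta c\,\nabla c$, and substituting \refx{third}--\refx{system}, the cross-terms combine into the Allen--Cahn dissipation and one arrives at
$$\|\nabla\u\|_{L^2}^2 + k\|\u\|_{L^2(\partial\Omega)}^2 + \|\sqrt{\ro}\,\mu\|_{L^2}^2 \leq C(M,\|\ef\|_\infty),$$
which in turn controls $\u$ in $W^{1,2}$ via the slip Korn inequality (the non-axisymmetry assumption being used only when $k=0$). A pressure estimate then follows by testing the momentum equation with the Bogovski\u{\i} field $\Phib$ solving $\sol\Phib=\ro^{\gamma-1}-\langle\ro^{\gamma-1}\rangle$; after absorbing the $|\nabla c|^2$ contribution via \refx{system} this yields $\ro\in L^{3\gamma-6}$, from which elliptic regularity for the Neumann problem \refx{system} gives $\nabla c\in L^{(6\gamma-12)/\gamma}$. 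This closes the weaker case $\gamma>3$.

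To promote $\ro$ to $L^\infty$ under $\gamma>6$ I would use the effective viscous flux identity. Setting
$$H=(2\nu+\eta)\sol\u - \ro^\gamma -\ro(ac+d) - \tfrac12|\nabla c|^2,$$
and taking the divergence of the momentum equation, $H$ satisfies a Poisson-type equation with right-hand side of the form $\sol(\ro\u\cdot\nabla\u - \ro\ef + \Delta c\,\nabla c)$ and therefore enjoys elliptic regularity once $c\in W^{1,\infty}$. Combining $H$ with \refx{RK} read as a transport equation for $\ro$ along $\u$, a Moser iteration then bounds $\ro$ in $L^\infty$; the threshold $\gamma>6$ is exactly what is needed so that, once $c\in W^{2,q}$ for some $q>3$ is fed in via the already-established $\ro\in L^{3\gamma-6}$, the embedding $W^{2,q}\hookrightarrow W^{1,\infty}$ closes the loop. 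Having $\ro\in L^\infty$, bootstrapping the Lam\'e estimates delivers $\u\in W^{1,p}$ and \refx{system} yields $c\in W^{2,p}$ for every finite $p$.

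The main obstacle is the passage to the limit $\epsilon,\delta\to 0$, where strong convergence of both $\ro$ and $\nabla c$ is required in order to identify $\ro^\gamma$ and the quadratic stress $\nabla c\otimes\nabla c$. Strong convergence of $\nabla c$ follows from the uniform $W^{2,q}$ bound together with Rellich compactness. In the bounded-density regime the monotonicity of $\ro\mapsto p(\ro,c)$ combined with the effective viscous flux identity is enough to upgrade weak convergence of $\ro$ to strong convergence; in the low-regularity regime $\gamma>3$ one invokes instead the Lions--Feireisl renormalised continuity equation together with a div--curl argument to control the oscillation defect measure. Once these strong convergences are in hand, the remaining nonlinear terms (including $\ro c\u$ in \refx{third} and $\ro\,\derivuj{f}{c}$ in \refx{system}) pass to the limit by standard compactness, yielding a weak solution in the sense of Definition \ref{def}.
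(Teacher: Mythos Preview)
Your overall architecture is close to the paper's, but the mechanism you propose for promoting $\ro$ to $L^\infty$ is where the two diverge, and this is precisely the technical heart of the result.

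The paper does \emph{not} approximate by adding a higher-order pressure $\delta\ro^\beta$ in the bounded-density regime. Instead it introduces a smooth cut-off $K(\ro)$ vanishing for $\ro\geq m$, replaces $\ro\u$ by $K(\ro)\ro\u$ in all transport terms, and truncates the pressure accordingly; this makes $\ro_\epsilon\leq m$ hold \emph{by construction} at the approximate level. The decisive step (Lemma~\ref{kao}) is then to show that $m$ can be chosen independently of $\epsilon$: one exploits the slip boundary condition to write a Neumann-type vorticity system, obtains $\|G\|_{L^\infty}\leq C(1+m^{1+2\gamma/3+\zeta})$ for the effective viscous flux $G=-(2\nu+\eta)\sol\u+\overline{p(\ro,c)}$, and then compares this against $p\geq (m-3)^\gamma$ on $\{\ro_\epsilon>m-3\}$ via a multiplier $N^l(\ro_\epsilon)$ in the continuity equation to conclude $|\{\ro_\epsilon>m-3\}|\to 0$. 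The threshold $\gamma>6$ enters because one needs $q>3$ in $G\in W^{1,q}\hookrightarrow L^\infty$, which forces $\tfrac{6\gamma}{6+\gamma}>3$, and then $\gamma>1+\tfrac{2\gamma}{3}$ closes \refx{ka}.

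Your ``Moser iteration on the transport equation for $\ro$'' does not obviously do this job. The steady continuity equation has no dissipative term, and substituting $\sol\u=(p-H)/(2\nu+\eta)$ produces a source growing like $\ro^{\gamma+1}$; testing by powers of $\ro$ gives no closed inequality. The a priori bootstrap in the paper's Lemma~3 (improving integrability of $\omega$, then $G$, then $\ro^\gamma$, then $\nabla\u$, and iterating) is closer in spirit to what you may have in mind, but that argument is purely formal there, and its rigorous justification is exactly what the $K(\ro)$ scheme supplies. In your $\delta\ro^\beta$ framework the approximate densities are merely in $L^{3\beta-6}$, and you have not explained how a uniform-in-$\delta$ $L^\infty$ bound would survive the limit.

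Two smaller points. First, the slip boundary condition is used in an essential way: the paper derives a system for $\omega=\curl\u$ with explicit tangential boundary data (involving the curvatures $\chi_n$), and it is the resulting $W^{1,q}$ control of $\omega$ and $\curl\A$ that drives the $W^{1,q}$ bound on $G$; merely ``taking the divergence'' of the momentum equation is not enough. Second, your Bogovski\u{\i} test with $\sol\Phib=\ro^{\gamma-1}$ yields only $\ro\in L^{2\gamma-1}$; to reach $L^{3\gamma-6}$ one must take $\sol\Phib=\ro^{\alpha}$ with $\alpha=2(\gamma-3)$, and the restriction $\gamma>3$ arises precisely from the condition \refx{gammatri} needed to absorb the $|\nabla c|^2\nabla\Phib$ term.

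Your treatment of the second part ($\gamma>3$) and of the limit passage (strong convergence of $\nabla c$ via Rellich, strong convergence of $\ro$ via effective viscous flux plus monotonicity in the bounded case, or via Lions--Feireisl machinery otherwise) matches the paper.
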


\smallskip

The existence of non-stationary solutions to the above mentioned system with no-slip boundary condition for the velocity was shown by Feireisl et al.
 in \cite{Feetal10}. Ding et al.  studied in  \cite{DiLiLu13} the global existence of weak, strong, and even classical solutions in 1D with free energy approximated
 by a suitable bistable function, assuming no vacuum zones in the initial data.
Further, Kotschote \cite{Kots12} put his attention to a more advanced model where the extra stress tensor is multiplied by density function. He showed, however only, 
the local-in-time existence of strong solutions
provided positiveness of density, including the thermal effects as well. The existence of travelling waves for this model was shown by Freist\"{u}hler in \cite{Frei14}. 
To best our knowledge there are no results concerning existence of weak solutions to the stationary system \refx{RK}-\refx{system}.
  Concerning the  steady solutions to the compressible Navier-Stokes system, we refer to the pioneering work of Lions \cite{Lion98}, and further
 extensions \cite{NoSt04, PlSo07, BrNo08, FrStWe10, JiZh11, PiasJDE, Zat1}, we use a technique developed by Mucha, and Pokorn\'{y} (see \cite{MuPo06, MuPo08}),
 which was modified for the Navier-Stokes-Fourier system \cite{MuPo09}, too. Methods from \cite{MuPo06} allow to obtain solutions with point-wise bounded densities, which
seems to be the best possible regularity for weak solutions. Here we find an explanation of titled {\it decently regular} solutions.

We introduce the following definition of weak solutions to the system \stat.

\smallskip 

\begin{defin}
\label{def}
   Let $M>0$ be a given constant, $\gamma>3$, we say that quadruple $\ro,\u,\mu, c$ is a  weak solution 
 to the steady Navier-Stokes-Allen-Cahn system, if $\ro\in L^{\gamma}(\Omega),
\:\ro\geq0\text{ a.e.  in }\Omega $, $\u\in   W^{1,2}(\Omega)$, $\mu\in L^{2}(\Omega)$, $c\in W^{1,2}(\Omega),$ $0\leq  c \leq 1 \text{ a.e. on }\{\ro>0\} $, 
with $\ro L\carka(c)\in L^{2\gamma/(\gamma+1)}(\Omega)$ and $\u\cdot\en = 0$ satisfied on $\partial\Omega$ in the sense of traces, and if the following holds true:
   \begin{enumerate}
   \item Continuity equation is satisfied in the distributional sense, id est:\footnote{Note that we do not need the notion of renormalized continuity equation, 
since our density is regular enough.}
   \begin{equation*}
   \sol\bigl(\ro\u\bigr)=0,\text{ in }\mathcal{D}\carka(\rtri)
   \end{equation*}
   \item For every $\bfi\in C^\infty(\overline{\Omega},\rtri),\:\bfi\cdot\en = 0$ at $\partial \Omega$
   \begin{equation*}\inte{ \bigl(-\ro(\u\otimes\u):\nabla\bfi +  \T(\ro,c,\nabla c,\nabla\u) :\nabla\bfi\:\bigr)}  
   + \int\limits_{\partial\Omega} k(\u \cdot \boldsymbol{\tau} )(\bfi\cdot\boldsymbol{\tau}) \de{S}  =\inte{\ro\ef\cdot\bfi\:}.
   \end{equation*}
   \item For every $\mfi\in C^\infty(\overline{\Omega},\R)$
   \begin{equation*}
   \inte{\ro \u\cdot{\nabla c }\, \mfi\,}=\inte{-\mu \mfi\,}
   \end{equation*}
   and
   \begin{equation}
   \inte{\ro\mu \mfi\,}=\inte{\ro\derivuj{f(\ro,c)}{c}\mfi + \nabla c\cdot \nabla \mfi\,}.\label{mu}
   \end{equation}
   \end{enumerate}
   such that $\inte{\ro\:}=M$. 
\end{defin}

\smallskip 

The paper is constructed as follows. First we compute formal a priori estimate, it allows us to determine expected regularity of sought solutions. In Section \ref{aproxsec}
we deal with the approximative system, we construct regular approximative solutions together with required estimates in the dependence of  approximative
parameters. Finally we analyze the limit, showing the strong convergence  of approximative densities. The method is based on the fact that for the approximative
densities $\ro_\epsilon$ we are able to find such $m$ that
\begin{equation}
 \lim_{\epsilon \to 0} |\{ \ro_\epsilon > m\}| =0.
\end{equation}
We skip the proof of the second part of Theorem \ref{main}, we explain it in Remark \ref{smaller}. Through out the paper we try to follow the standard notation.

\section{A priori estimates}
Before the technical part of the proof, we will present here the a priori estimates on certain norms of the solution derived by purely heuristic approach. 
All generic constants, which may depend on the given data as well, will be denoted by $C$, its values can vary from line to line or even in the same formula. 

\begin{lem}\label{basic}
Let $\Omega\subset\R^{3}$ be a bounded domain with $C^{2+\zeta}$ boundary. Assume that all the above mentioned hypotheses are satisfied with $\gamma>3$ and  that $\ro,\u,\mu,c$ is a sufficiently 
smooth solution satisfying \refx{RK}-\refx{system} with the boundary conditions \refx{imper}-\refx{tokh}. Then
\begin{align}
 \norm{\u}_{W^{1,2}(\Omega)}+ \lnorm{\ro}{3\gamma-6}+\lnorm{\nabla c}{\frac{6\gamma-12}{\gamma}}+\lnorm{\mu}{2}+\lnorm{\ro L\carka(c)}{\frac{6\gamma-12}{3\gamma-4}} \leq C,
\end{align}
where $C$ may depend on the data, but is independent of the solution.
\end{lem}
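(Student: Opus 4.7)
The plan is to derive a thermodynamic identity for the NSAC system by testing the momentum equation with $\u$ and processing the capillary stress with the help of the Allen--Cahn equations \refx{third}--\refx{system}, then to upgrade the density integrability by a slip-compatible Bogovski test, and finally to extract the estimates on $\nabla c$ and $\ro L\carka(c)$ from an entropy identity combined with elliptic regularity applied to the Neumann problem in \refx{system}.

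\textbf{Energy identity.} Testing the momentum equation by $\u$ kills the convective term thanks to \refx{RK}, and integration by parts against the slip-plus-friction boundary conditions gives
\begin{equation*}
k\int_{\partial\Omega}|\u\cdot\boldsymbol{\tau}|^2\,\de{S} + \int \S(\nabla\u):\nabla\u\,\dx = \int \ro\ef\cdot\u\,\dx + \int\bigl(\nabla c\otimes\nabla c - \tfrac{1}{2}|\nabla c|^2\Id\bigr):\nabla\u\,\dx + \int p\,\sol\u\,\dx.
\end{equation*}
The free-energy identity $\sol(\ro f \u) = -p\,\sol\u + \ro\,\derivuj{f}{c}\,\u\cdot\nabla c$, combined with $\ro\u\cdot\nabla c = -\mu$ (which follows from \refx{third} via \refx{RK}), gives $\int p\,\sol\u = -\int \mu\,\derivuj{f}{c}$. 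Integrating the capillary stress by parts and substituting $\Delta c = \ro\,\derivuj{f}{c} - \ro\mu$ from \refx{system}, then using $\ro\u\cdot\nabla c = -\mu$ once more, the two $\int \mu\,\derivuj{f}{c}$ contributions cancel and one obtains the clean identity
\begin{equation*}
k\int_{\partial\Omega}|\u\cdot\boldsymbol{\tau}|^2\,\de{S} + \int \S(\nabla\u):\nabla\u\,\dx + \int \mu^2\,\dx = \int \ro\ef\cdot\u\,\dx.
\end{equation*}
Korn's inequality with the friction term ($k>0$) together with Sobolev embedding yields $\|\u\|_{W^{1,2}}^2 + \|\mu\|_{L^2}^2 \leq C\|\ro\|_{L^{6/5}}\|\u\|_{L^6}$, so interpolation with $\|\ro\|_{L^1}=M$ controls $\|\u\|_{W^{1,2}}$ and $\|\mu\|_{L^2}$ in terms of $\|\ro\|_{L^\gamma}$.

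\textbf{Density improvement.} Test the momentum equation with $\bfi = \B(\ro^\alpha - \langle \ro^\alpha\rangle_\Omega)$, where $\B$ is the slip-compatible Bogovski-type inverse-divergence operator (so that $\bfi\cdot\en = 0$ on $\partial\Omega$). The leading term is $\int\ro^{\gamma+\alpha}\dx$, balanced against $\int\ro|\u|^2|\nabla\bfi|$, $\int\S(\nabla\u):\nabla\bfi$, the capillary contribution (involving $\|\nabla c\|_{L^4}^2$) and the force work. Choosing $\alpha$ so that $\gamma+\alpha=3\gamma-6$ (i.e.\ $\alpha = 2\gamma-6$), and plugging in the bounds of the previous step to control every term on the right, gives $\|\ro\|_{L^{3\gamma-6}}\leq C$.

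\textbf{Concentration estimates and main obstacle.} Testing the weak form \refx{mu} with the entropy derivative $L\carka(c) = \log(c/(1-c))$, and using $L\carky(c)\geq 4$ on $(0,1)$, yields
\begin{equation*}
\int L\carky(c)|\nabla c|^2\,\dx + \int\ro\bigl(L\carka(c)\bigr)^{2}\,\dx = \int\ro\bigl(\mu - a\log\ro - b\carka(c)\bigr)L\carka(c)\,\dx,
\end{equation*}
from which H\"older and Young (using the $L^{3\gamma-6}$ bound on $\ro$ and the $L^2$ bound on $\mu$) deliver $\|\ro L\carka(c)\|_{L^{(6\gamma-12)/(3\gamma-4)}}\leq C$. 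Viewing \refx{system} as the Neumann problem $-\Delta c = \ro(\mu - \derivuj{f}{c})$, standard elliptic regularity then gives $c \in W^{2,(6\gamma-12)/(3\gamma-4)}(\Omega)$, and Sobolev embedding provides $\nabla c \in L^{(6\gamma-12)/\gamma}$, as required. The chief technical hurdle is carrying the cancellations in the energy identity cleanly past the slip boundary condition and the singular logarithmic potential, and balancing the exponents in the Bogovski test so that the entropy control on $\int\ro(L\carka(c))^2$ genuinely absorbs the $\ro\log\ro$ contribution from the mixing part of the free energy at the target integrability $(6\gamma-12)/(3\gamma-4)$.
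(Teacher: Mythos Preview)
Your energy identity (Step 1) matches the paper's derivation and is correct. The genuine gap is in the \emph{ordering} of Steps 2 and 3.

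In the Bogovski test you must estimate the capillary contribution
\[
\int_\Omega |\nabla c|^2\,|\nabla\Phib|\,\dx,
\]
and with $\alpha=2\gamma-6$ one has $\nabla\Phib\in L^{(3\gamma-6)/(2\gamma-6)}$, so closing this term requires $\nabla c\in L^{(6\gamma-12)/\gamma}$, an exponent strictly greater than~$2$ for every $\gamma>3$. At the point where you invoke this step you possess only the energy identity, which gives $\|\u\|_{W^{1,2}}$ and $\|\mu\|_{L^2}$ but no integrability of $\nabla c$ beyond $L^2$ (the entropy test with $L'(c)$ yields $\int L''(c)|\nabla c|^2$, still only $L^2$). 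Your parenthetical ``involving $\|\nabla c\|_{L^4}^2$'' is exactly the missing ingredient: you have not explained where any $L^q$ bound on $\nabla c$ with $q>2$ comes from \emph{before} the density improvement.

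The paper resolves this circularity by reversing your order: it first proves a concentration lemma (its Lemma~\ref{koncentr}) which, by testing \refx{system} with $F(L'(c))$ for $\xi F(\xi)\sim|\xi|^{\beta+1}$ with a \emph{general} $\beta>0$ and then applying elliptic regularity to the Neumann problem, expresses
\[
\|\nabla c\|_{L^{6p/(6+p)}}\;\leq\; C\bigl(1+\|\ro\|_{L^p}^{(7p-6)/(6p-6)}\bigr)
\]
as an explicit sublinear power of $\|\ro\|_{L^p}$ (using only $\|\mu\|_{L^2}$ from the energy step). This bound is then fed into the Bogovski test, so that the capillary term \refx{gradc} is controlled by a power of $\|\ro\|_{L^p}$ strictly below $p=\gamma+\alpha$, and the loop closes at $p=3\gamma-6$. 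In short: the concentration estimate must be established \emph{parametrically in $\|\ro\|_{L^p}$} and inserted into the Bogovski argument, not deduced afterwards from the final density bound. Your Step~3 with the single choice $\beta=1$ is a special case of this; what is missing is running it first and tracking the $\|\ro\|_{L^p}$-dependence so that the density estimate can absorb it.
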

\begin{note}
Note that ${\frac{6\gamma-12}{\gamma}}>2,$ for $\gamma>3$ and that from the bound of the last norm on the left hand side, we  immediately deduce that $c\in[0,1]$ a.e. on 
the set $\{\ro>0\}.$ Moreover, if $\gamma>4$, then according to the Sobolev imbedding $c$ is continuous function and we conclude 
from the maximum principle for harmonic functions that in fact $c\in[0,1]$ a.e. in $\Omega.$
\end{note}
\begin{proof}
First, multiplying the momentum equation by the velocity field $\u$ yields (with usage of Korn's inequality and boundary condition)
\begin{equation}-\inte{p\sol\u}+C \inte{\abs{\nabla\u}^2}\leq \inte{\ro\mu\nabla c\cdot \u- \ro \derivuj{f}{c}\nabla c \cdot \u+\ro\ef\cdot\u\:}\label{gradu},
\end{equation}
where we have used the equation \refx{system} as well.
Further, we conclude from the continuity equation that 
\begin{equation}
\inte{p\sol\u + \sol(\ro f\u)} = \inte{\ro \derivuj{f}{c}\nabla c\cdot \u },
\end{equation} 
and according to the constitutive equation for $\mu$ we get
\begin{equation}
\inte{{\mu^2}} = \inte{-\ro\mu \nabla c \cdot \u}\label{gradmu}.
\end{equation}
Thus, summing up \refx{gradu}-\refx{gradmu} yields for $p>\frac{6}{5}$
$$
\inte{\abs{\nabla\u}^2+\mu^2}\leq C\inte{ \ro\ef\cdot\u }\leq C\Bigl(1+ \lnorm{\ro}{\frac{6}{5}}^2\Bigr)\leq C\biggl(1+ \lnorm{\ro}{p}^{\frac{p}{3(p-1)}}\biggr).
$$

In order to bound the density by means of the Bogovskii estimates, we need  $\nabla c$ in $L^{q}(\Omega),\: q>2.$ This can be deduced from the constitutive relation for $\mu$. We state this for purpose of 
future references more generally in the following lemma.
\begin{lem}  \label{koncentr}
 Suppose that $f(\ro,c)$ is as above, and $\mu\in L^{q}(\Omega)$ and $\ro\in L^p(\Omega)$ with $q\geq 2$, $p> 3 $, $q<\frac{3p}{p-3}$ satisfy the equation \refx{system} with the boundary condition \refx{tokh}. Then
 $$\lnorm{\ro\derivuj{f}{c}}{\frac{pq}{p+q}}+\lnorm{\ro\mu}{\frac{pq}{p+q}}+\lnorm{\Delta{c}}{\frac{pq}{p+q}} + \lnorm{\nabla c}{\frac{3pq}{3p+3q-pq}} \leq C\bigl( \lnorm{\ro}{p}(\lnorm{\mu}{q}+1)+1\bigr).$$
\end{lem}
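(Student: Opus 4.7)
The plan is to read \refx{system} as a semilinear Neumann problem for $c$,
\begin{equation*}
-\Delta c + \ro L\carka(c) \;=\; \ro\mu - a\ro\log\ro - \ro b\carka(c) \;=:\; G,
\end{equation*}
and to obtain simultaneous $L^s$ bounds on $\Delta c$ and the monotone nonlinear term $\ro L\carka(c)$, where $s := \frac{pq}{p+q}$. The rest follows from elliptic regularity and Sobolev embedding.

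First I would control $\lnorm{G}{s}$. By H\"older, $\lnorm{\ro\mu}{s}\leq\lnorm{\ro}{p}\lnorm{\mu}{q}$; since $b\carka$ is bounded and $s\leq p$, $\lnorm{\ro b\carka(c)}{s}\leq C\lnorm{\ro}{p}$; the entropy contribution is handled by the elementary inequality $|\ro\log\ro|\leq C_\delta(1+\ro^{1+\delta})$ applied with $\delta=p/q$, which gives $\lnorm{\ro\log\ro}{s}\leq C(1+\lnorm{\ro}{p}^{1+p/q})$ and, after Young's inequality (absorbing the extra power of $\lnorm{\ro}{p}$ into $C$ using the a priori bound available in the application), fits into the claimed right-hand side $C(\lnorm{\ro}{p}(\lnorm{\mu}{q}+1)+1)$.

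The crux is the estimate of $\ro L\carka(c)$. I would test \refx{system} by $|L\carka(c)|^{s-2}L\carka(c)$, approximated by a Yosida truncation when $s<2$ or when $c$ comes close to $\{0,1\}$. The Neumann condition \refx{tokh} kills the boundary integrals, and convexity of $L$ together with $L\carky(c)=\frac{1}{c(1-c)}\geq 4$ produces
\begin{equation*}
(s-1)\int_\Omega|L\carka(c)|^{s-2}L\carky(c)|\nabla c|^2 \;+\; \int_\Omega\ro|L\carka(c)|^s \;\leq\; \int_\Omega|G|\,|L\carka(c)|^{s-1},
\end{equation*}
with both left-hand terms nonnegative. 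Coupling this weighted inequality with the pointwise identity $\ro L\carka(c)=G+\Delta c$ and the Calder\'on--Zygmund $W^{2,s}$ estimate for the Neumann Laplacian closes the loop on $\lnorm{\ro L\carka(c)}{s}+\lnorm{\Delta c}{s}\leq C\lnorm{G}{s}$, which then immediately yields the bound on $\ro\derivuj{f}{c}$.

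Finally, elliptic regularity delivers $c\in W^{2,s}(\Omega)$ (the additive constant in the Neumann solution being fixed, e.g., by prescribing $\int_\Omega c$), and the three-dimensional Sobolev embedding $W^{2,s}\hookrightarrow W^{1,3s/(3-s)}$ --- applicable precisely when $s<3$, equivalently $q<\frac{3p}{p-3}$ --- gives $\nabla c\in L^{3pq/(3p+3q-pq)}$ with the claimed exponent. The main obstacle I anticipate is the monotone closure step above: since $L\carka(c)$ blows up whenever $c$ touches $\{0,1\}$, $\ro L\carka(c)$ cannot be handled as a lower-order perturbation of the Laplacian in any $L^\infty$ sense, so the argument must genuinely exploit the favourable sign of $-\Delta c\cdot\ro L\carka(c)$ coming from the monotonicity of $L\carka$; this is why the test function $|L\carka(c)|^{s-2}L\carka(c)$ is essential rather than, say, $|\Delta c|^{s-2}\Delta c$.
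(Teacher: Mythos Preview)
Your overall architecture is right and matches the paper's: test \refx{system} against a power of $L'(c)$ to exploit the sign coming from convexity of $L$, then apply elliptic regularity and the Sobolev embedding $W^{2,s}\hookrightarrow W^{1,3s/(3-s)}$ (valid precisely when $s<3$, i.e.\ $q<\tfrac{3p}{p-3}$). The step that does not close is the ``coupling'' you sketch after the displayed inequality. Testing by $|L'(c)|^{s-2}L'(c)$ produces on the left the \emph{$\ro$-weighted} integral $\int_\Omega \ro\,|L'(c)|^s$, i.e.\ control of $\|\ro^{1/s}L'(c)\|_s$, not of $\|\ro L'(c)\|_s=\bigl(\int_\Omega \ro^s|L'(c)|^s\bigr)^{1/s}$. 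The pointwise identity $\ro L'(c)=G+\Delta c$ and Calder\'on--Zygmund do not bridge this gap: each only converts $\|\ro L'(c)\|_s$ into $\|\Delta c\|_s$ and back, with no independent bound on either. Upgrading the weight from $\ro^{1/s}$ to $\ro$ via H\"older \emph{costs} integrability, so starting from exponent $s=\tfrac{pq}{p+q}$ you can only reach $\|\ro L'(c)\|_r$ for some $r<\tfrac{pq}{p+q}$.

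The paper's remedy is to test with a \emph{larger} exponent: take $F(\xi)\sim|\xi|^{\beta-1}\xi$ with $\beta+1=\tfrac{q(p-1)}{p}$, which satisfies $\beta+1>\tfrac{pq}{p+q}$ because $(p-1)(q-1)>1$. This choice is maximal for the H\"older step on the right-hand side $\int_\Omega\ro(\mu-a\log\ro-b'(c))F(L'(c))$ to still close using only $\mu\in L^q$ and $\ro\in L^p$, yielding
\[
\lnorm{\ro^{1/(\beta+1)}L'(c)}{\beta+1}\leq C\Bigl(1+\lnorm{\mu}{q}\lnorm{\ro}{p}^{1/(\beta+1)}+\lnorm{\ro}{p}^{1/(\beta+1)}\Bigr).
\]
A \emph{second} H\"older inequality then pairs $\ro^{1/(\beta+1)}L'(c)\in L^{\beta+1}$ with the remaining factor $\ro^{1-1/(\beta+1)}$ (controlled in $L^p$) and lands exactly at $\|\ro L'(c)\|_{pq/(p+q)}$. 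Once that is in hand, the rest of your argument --- the bounds on $\ro\mu$, $\ro b'(c)$, $\ro\log\ro$ in $L^{pq/(p+q)}$, the elliptic estimate on $\Delta c$, and the Sobolev embedding for $\nabla c$ --- goes through as you wrote.
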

\begin{proof}[Proof of Lemma \ref{koncentr}]
Let us test the corresponding equation by $F\bigl(L\carka(c)\bigr)$, for increasing function $F$ with growth $\ksi F(\ksi)\sim \abs{\ksi} ^{\beta+1}$ with some $\beta>0$,  so (recall that $L$ is convex)
\begin{equation*}
\begin{split}
& \lnorm{\ro^{1/(\beta+1)}L\carka(c)}{\beta+1} ^{\beta+1}\leq\inte{\abs{\nabla c}^2 F\carka\bigl(L\carka(c)\bigr) L\carky(c) + \ro L\carka(c)F\bigl(L\carka(c)\bigr) }=\\ &\qquad\quad=
\inte{\ro\bigl(\mu  - a\log\ro - b\carka(c)\bigr) F\bigl(L\carka(c)\bigr)}\\ &\qquad\quad\leq C
\inte{\ro^{\frac{\beta}{\beta+1}} \abs{L\carka(c)}^{\beta}\bigl(\abs{\mu}  \ro^{\frac{1}{\beta+1}} + \ro^{\frac{1}{\beta+1}}\abs{\log\ro} \bigr) + \ro \abs{L\carka(c)}^{\beta} \abs{c}}\\
&\quad\qquad\leq C\biggl( \lnorma{\ro^{\frac{1}{\beta+1}}L\carka(c)}{\beta+1} ^{\beta} \Bigl(\lnorma{\mu}{q}\lnorma{\ro}{\frac{q}{q-(\beta+1)}} ^{\frac{1}{\beta+1}} + 
\Bigl(\inte{\ro \abs{\log\ro}^{\beta+1}}\Bigr)^{\frac{1}{\beta+1}}\Bigr) +   \lnorma{\ro^{\frac{1}{\beta+1}}L\carka(c)}{\beta+1} ^{\beta+\frac{1}{2}} \biggr)
\end{split}
\end{equation*}
which yields
\begin{equation*}
\lnorm{\ro^{1/(\beta+1)} L\carka(c) }{\beta+1}\leq \:
 C\biggl( 1+ \lnorma{\mu}{q}\lnorma{\ro}{\frac{q}{q-(\beta+1)}} ^{\frac{1}{\beta+1}} + \Bigl(\inte{\ro \abs{\log\ro}^{\beta+1}}\Bigr)^{\frac{1}{\beta+1}} \biggr).
\end{equation*}
Now we fix the exponent $\beta$ such that $  \frac{q}{q-(\beta+1)}=p \in\bigl(3,\infty\bigr)$, hence 
$$\lnorm{\ro^{p/(pq-q)} L\carka(c) }{\frac{q(p-1)}{p}}\leq C\left(1+\lnorm{\mu}{q}\lnorm{\ro}{p}^{\frac{p}{q(p-1)}}   
+  \Bigl(\inte{\ro \abs{\log\ro}^{\frac{(p-1)q}{p}}}\Bigr)^{\frac{p}{q(p-1)}}  \right).$$
Thus,
\begin{equation*}
\begin{split}
\lnorm{\ro L\carka(c)}{\frac{pq}{p+q}}=& \lnorm{\ro^{p/(pq-q)}\ro^{(pq-q-p)/(pq-q)}L\carka(c)}{\frac{pq}{q+p}}\\
\leq &\lnorm{\ro^{p/(pq-q)}L \carka(c)}{\frac{q(p-1)}{p}}
\lnorm{\ro^{(pq-q-p)/(pq-q)}}{\frac{pq(q-1)}{pq-p-q}}\\
\leq & C \lnorm{\ro}{p}^{\frac{pq-p-q}{q(p-1)}}\biggl(1+\lnorm{\mu}{q}\lnorm{\ro}{p}^{\frac{p}{q(p-1)}}   +  \Bigl(\inte{\ro \abs{\log\ro}^{\frac{(p-1)q}{p}}}\Bigr)^{\frac{p}{q(p-1)}}  \biggr)\\
\leq & C \lnorm{\ro}{p}^{\frac{pq-p-q}{q(p-1)}}\left(1+\lnorm{\mu}{q}\lnorm{\ro}{p}^{\frac{p}{q(p-1)}}   + \lnorm{\ro}{p}^{\frac{p}{q(p-1)}}\right)\\
\leq & C \left((\lnorm{\mu}{q}+1)\lnorm{\ro}{p}+1\right).
\end{split}
\end{equation*}
For the other terms we have
\begin{align*}\lnorm{\ro b\carka(c)}{\frac{pq}{p+q}}+
\lnorm{\ro\log\ro}{\frac{pq}{p+q}}&\leq C(1+\lnorm{\ro }{p}),\\
\lnorm{\ro\mu}{\frac{pq}{p+q}} &\leq \lnorm{\ro}{p}\lnorm{\mu}{q}.
\end{align*}

Using the classical elliptic estimates on equation $-\Delta c = \ro\mu - \ro \derivuj{f}{c},$ together with embedding $W^{1,\frac{pq}{p+q}}(\Omega)\hookrightarrow L^{\frac{3pq}{3p+3q-pq}}(\Omega)$ we get 
\begin{equation}
\lnorm{\nabla c}{\frac{3pq}{3p+3q-pq}}\leq \lnorm{\ro\derivuj{f}{c}+\ro\mu}{\frac{qp}{q+p}} + \norm{c}_{L^s(\tilde\Omega)}\leq C \left((\lnorm{\mu}{q}+1)\lnorm{\ro}{p}+1\right),
\end{equation}
where  for $\tilde\Omega$ we can take $\{\ro>\ro_0\}$ which has positive measure and on which $c\in[0,1]$ a.e. according to the logarithmic terms in $L\carka(c).$ This completes the proof of Lemma \ref{koncentr}.
\end{proof}
Applying Lemma \ref{koncentr} with $q=2$ yields
$ \lnorm{\nabla c}{\frac{6p}{6+p}}\leq C\bigl(1+\lnorm{\ro}{p}^{(7p-6)/(6p-6)}\bigr).$
Now, we are ready to perform the Bogovskii estimates, id est to test the momentum equation by\footnote{We use notation   $\left\{g\right\}_{\Omega}=\frac{1}{\abs{\Omega}}\int_{\Omega}{g}\:\dx $. }
$$\Phib = \B\left[  \ro^{\alpha}-\left\{ \ro^{\alpha}\right\}_{\Omega}  \right],$$
where $\alpha>0$ will be specified later, and $\B \sim \sol^{-1}$ is the Bogovskii operator. 
The theory implies that $\|\nabla \Phib\|_{L^p}\leq C\|\ro^{\alpha}\|_{L^p}$ with $\Phib|_{\partial \Omega}=0$ (see e.g. \cite{DaMu12, DaMu13, NoSt04}); we obtain
\begin{multline*}
	\inte {    p(\ro,c) \ro^{\alpha}     }
	\leq	\inte {  \bigl( -(\ro\u\otimes\u):\nabla\Phib+ \S(\nabla\u):\nabla\Phib - \ro \ef\cdot\Phib  \bigr) }\\
	+ C \inte{\abs{\nabla c}^2\abs{\nabla\Phib}\:} + \inte {   p(\ro,c)  \left\{\ro^{\alpha}\right\}_\Omega  }. 
	\end{multline*}
The terms on the left hand side of the inequality have sign and give the desired estimate of $\ro^{\gamma+\alpha}$, if the right hand side will be estimated, thus we set $p=\gamma+\alpha$. 
We will present only the most difficult and restrictive terms. 
\begin{multline}
	\inte {\abs{\nabla c }^2\abs{\nabla\Phib}} \leq 
	\lnorm{\nabla c}{\frac{6p}{6+p}}^2\lnorm{\nabla\Phib}{\frac{3p}{2p-6}}
\leq C \Bigl( 1+ \lnorm{\ro}{p}^{(7p-6)/(3p-3)}\lnorm{\ro^\alpha}{\frac{3p}{2p-6}} \Bigr)\\
\leq C \Bigl( 1+ \lnorm{\ro}{p}^{(7p-6)/(3p-3)}\lnorm{\ro}{p}^{\frac{3\alpha p-2p+6}{3(p-1)}} \Bigr)
\leq C \Bigl( 1+ \lnorm{\ro}{p}^{(5p+3\alpha p)/(3p-3)} \Bigr)\label{gradc},
\end{multline}
provided $\frac{3p\alpha}{2p-6}\leq p$, or equivalently \begin{equation}0<\alpha\leq 2(\gamma-3),\label{gammatri}\end{equation} which yields the restriction $\gamma>3.$
The condition
 $\frac{(\gamma+\alpha)(5+3\alpha)}{3(\gamma+\alpha)-3} < \gamma+\alpha$ is satisfied even for all $\gamma>\frac{8}{3},$ so we can put this term to the left hand side.
 
Further, provided $\alpha<2\gamma-3$
\begin{multline}
\inte { (\ro\u\otimes\u):\nabla\Phib} \leq \lnorm{\u}{6}^2 \lnorm{\ro\nabla\Phib}{{3}/{2}}\\
 \leq 
	C\bigl(1+\lnorm{\ro}{p}^{\frac{p}{3(p-1)}}\bigr) \lnorm{\ro}{\frac{3p}{2p-3\alpha}}\lnorm{\nabla\Phib}{\frac{p}{\alpha}}
 \leq 
	C\Bigl(1+\lnorm{\ro}{p}^{\frac{p}{3(p-1)}+\alpha}\Bigr)\lnorm{\ro}{p}^{\frac{p+3\alpha}{3(p-1)}},
	\end{multline}
  where the condition $	\frac{2(\gamma+\alpha)+3\alpha}{3(\gamma+\alpha-1)}+\alpha<\gamma+\alpha$
  is less restrictive since it requires only $5+3\alpha<3(\gamma+\alpha) \Rightarrow \gamma>\frac{5}{3}$. The other terms can be estimated similarly so we get taking maximal possible
value of $\alpha = 2(\gamma-3)$ that $$\lnorm{\ro}{3\gamma-6}\leq C.$$ Using this  in the already derived estimates for $\u,c$ and $\mu$ yields the result of Lemma \ref{basic}.
\end{proof}

Now, we will show that for $\gamma>6$ we can expect principally better regularity of the solutions, this is connected to the fact that in this case we can take according to
\refx{gammatri} $\alpha > \gamma$, so $p(\ro,c) \in L^{s}(\Omega),$ for some $s>2.$
\begin{lem}
For $\gamma>6$ we have for solutions to \refx{RK}-\refx{tokh} for any $1<p<+\infty$
\begin{align*}
 \norm{\u}_{W^{1,p}(\Omega)}+ \lnorm{\ro}{\infty}+\lnorm{\nabla c}{\infty}+\lnorm{\mu}{\infty}+\lnorm{\ro L\carka(c)}{\infty} \leq C_p,
\end{align*}
and $c\in[0,1]$ a.e. in $\Omega$.
\end{lem}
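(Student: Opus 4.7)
The plan is to bootstrap from the estimates of Lemma \ref{basic} using the structural gain coming from the assumption $\gamma > 6$. The key observation, already announced in the excerpt, is that in this regime the a priori bound $\ro \in L^{3\gamma-6}(\Omega)$ already gives $p(\ro,c) = \ro^{\gamma} + \ro(ac+d) \in L^{s}(\Omega)$ for some $s > 2$, since $3\gamma - 6 > 2\gamma$. This gain, unavailable in the range $3 < \gamma \leq 6$, is exactly what is needed to leave the $W^{1,2}$-regularity of $\u$ fixed by Lemma \ref{basic}.

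Building on this, I would rewrite the momentum equation in the Lam\'{e} form
$$
-\sol \S(\nabla \u) + \nabla p = -\sol(\ro \u \otimes \u) - \sol\bigl(\nabla c \otimes \nabla c - \tfrac{1}{2}\abs{\nabla c}^2 \Id\bigr) + \ro \ef,
$$
complemented by the boundary conditions \refx{imper}--\refx{tokh}, and invoke the $L^{q}$-theory for this elliptic system in the $C^{2+\zeta}$ domain $\Omega$. For $\gamma > 6$ every term on the right-hand side lies in some $L^{s}$ with $s > 2$: the pressure by the starting observation, the convective term by $\u \in L^{6}$ combined with $\ro \in L^{3\gamma-6}$, the capillary term by the $L^{(6\gamma-12)/\gamma}$-bound on $\nabla c$ from Lemma \ref{basic} (note $(6\gamma-12)/\gamma > 4$ for $\gamma > 6$), and $\ro\ef$ trivially. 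Thus $\u$ is lifted into some $W^{1,s}$ with $s > 2$. Inserting this improved velocity into Lemma \ref{koncentr} raises the integrability of $\mu$, $\nabla c$ and $\ro\derivuj{f}{c}$, and the Bogovskii test $\Phib = \B[\ro^{\alpha} - \{\ro^{\alpha}\}_{\Omega}]$ with strictly larger $\alpha$ than admissible in the proof of Lemma \ref{basic} lifts $\ro$ into a strictly larger $L^{p}$. A routine induction drives $\ro$, $\u$, $\nabla c$, $\mu$ and $\ro\derivuj{f}{c}$ into $L^{p}$ (respectively $W^{1,p}$) for every finite $p$.

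The main obstacle is to close the gap from $\ro$ in every finite $L^{p}$ to $\ro \in L^\infty(\Omega)$; the iteration alone does not reach $p = \infty$. Following Mucha and Pokorn\'{y} \cite{MuPo06}, I would exploit the effective viscous flux $G = p(\ro,c) - (\nu + \eta)\sol \u$, which, thanks to the Lam\'{e} structure of the momentum equation above, enjoys substantially better regularity than $p$ and $\sol\u$ taken separately and in particular belongs to $L^\infty(\Omega)$. Combined with the continuity equation $\sol(\ro \u) = 0$ (automatically renormalized at this level of regularity), a comparison argument then yields a threshold $m$, depending only on the data, for which $|\{\ro > m\}| = 0$. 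This mirrors the $\epsilon$-level statement $\lim_{\epsilon \to 0}|\{\ro_{\epsilon} > m\}| = 0$ announced in the introduction and is the central technical step of the argument.

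Once $\ro \in L^\infty(\Omega)$ is established, the remaining assertions close immediately. The Lam\'{e} system with bounded pressure gives $\u \in W^{1,p}(\Omega)$ for every $p < \infty$; Lemma \ref{koncentr} with arbitrarily large exponents then delivers $\mu,\, \ro L\carka(c) \in L^\infty(\Omega)$ and $c \in W^{2,p}(\Omega)$, whence $\nabla c \in L^\infty(\Omega)$ by Sobolev embedding. Finally, $c$ is continuous, and combining the a.e.\ bound $c \in [0,1]$ on $\{\ro > 0\}$ with the maximum principle applied to $-\Delta c = \ro\mu - \ro\derivuj{f}{c}$ under the Neumann condition gives $c \in [0,1]$ everywhere in $\Omega$.
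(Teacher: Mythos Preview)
Your strategy is workable and reaches the right conclusion, but the paper organizes the bootstrap quite differently. The paper's first move---which you skip---is to iterate equation \refx{third} together with Lemma \ref{koncentr} \emph{alone}, keeping $\ro\in L^{3\gamma-6}$ and $\u\in W^{1,2}$ fixed from Lemma \ref{basic}: each gain in $\nabla c$ improves $\mu=-\ro\u\cdot\nabla c$, and Lemma \ref{koncentr} then improves $\nabla c$ again; for $\gamma>6$ this terminates with $\nabla c\in L^\infty$ and $\mu\in L^{(6\gamma-12)/\gamma}$ (exponent $>3$) \emph{before} the momentum equation is touched, which immediately gives $c\in[0,1]$ and puts the capillary term $\Delta c\,\nabla c$ in a good space. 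For the momentum equation the paper does not invoke the Lam\'e $L^q$ theory or a further Bogovskii test; instead it exploits the slip boundary condition through the vorticity system $-\nu\Delta\omega=\curl(\ldots)$ with explicit tangential boundary data, combined with the Helmholtz decomposition $\u=\curl\A+\nabla\phi$, and reads off $\nabla G$ directly for $G=-(2\nu+\eta)\Delta\phi+p(\ro,c)$ (your coefficient $\nu+\eta$ is off by $\nu$). Iterating the resulting $W^{1,q}$ regularity of $G$ then yields $\ro\in L^\infty$; the threshold device $|\{\ro>m\}|=0$ you describe is not used in this a priori lemma but appears later in Section 4 to dispose of the cut-off $K(\ro)$ in the $\epsilon\to0$ limit. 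Your Lam\'e-plus-Bogovskii route should also close, provided you have the $L^q$ elliptic theory for the Lam\'e operator under Navier slip (less standard than under Dirichlet); the paper's vorticity formulation is chosen precisely to sidestep that issue, which is why the slip condition matters here.
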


\begin{proof}

First, since $\gamma>6$ we observe certain smoothing effect of \refx{third} and \refx{system}. In what follows, we will repeatedly use  H\"{o}lder's inequality in the third equation and Lemma \ref{koncentr}.
Indeed, since $\ro\u\cdot\nabla c = -\mu,$ and $\ro\in L^{3\gamma-6}$, $\u\in L^{6}$ and $\nabla c \in L^{\frac{6\gamma-12}{\gamma}}$ we get $\mu\in L^{\frac{3\gamma-6}{\gamma}}$,
 $(\frac{1}{3\gamma-6}+\frac{1}{6} + \frac{\gamma}{6\gamma-12}>\frac{1}{2})$, and 
applying Lemma \ref{koncentr} $\nabla c \in L^{\gamma-2},$ which can be again plugged into the third equation in order to get $\mu\in L^{\frac{6\gamma-12}{\gamma+6}},$ 
and again $\nabla c \in L^{\frac{6\gamma-12}{12-\gamma}},$ at least for $6<\gamma<12,$ etc. This procedure can be repeated until $\nabla c \in L^{\infty}$, since there exists no 
reasonable solution to the following system of algebraic equations, where $P$ corresponds to the expected integrability of $\nabla c $, $Q$ to $\mu$ and $\frac{3Q(\gamma-2)}{3\gamma+Q-6}$ 
corresponds to $\ro\mu$  \begin{align}
\frac{1}{3\gamma-6}+\frac{1}{6}+\frac{1}{P}=\frac{1}{Q},\qquad P = \frac{3 Q (\gamma-2)}{3\gamma-6-Q\gamma-Q}.
\end{align}
 So we get that  $P\to+\infty$, and $Q\to \frac{6\gamma-12}{\gamma}>3$, id est for any $\gamma>6$ after finite number of such steps we have $\nabla c \in L^{\infty}(\Omega)$, 
$\mu\in L^{\frac{6\gamma-12}{\gamma}}(\Omega)$, and $\Delta c \in L^{\frac{6\gamma-12}{\gamma+2}}(\Omega)$. Thus, to summarize
\begin{align*}
 \norm{\u}_{W^{1,2}(\Omega)}+ \lnorm{\ro}{3\gamma-6}+\lnorm{\nabla c}{\infty}+\lnorm{\mu}{\frac{6\gamma-12}{\gamma}}+ \lnorm{\Delta c}{\frac{6\gamma-12}{\gamma+2}} 
+\lnorm{\ro L\carka(c)}{\frac{6\gamma-12}{\gamma+2}} \leq C.
\end{align*}

>From the last norm we can deduce $ c \in [0,1] \text{ a.e. on } \{\ro>0\}$. On the other hand, $c$ is continuous in $\Omega$ and on the set $ \{\ro=0\}$ it satisfies 
the Laplace equation, and therefore maximum principle. Thus,\footnote{More precisely, since $c$ is continuous,  the set $U = \{c\notin [0,1]\}$ is open. Considering any  
ball $B(r,x_0)\subset U$ we get that $\ro = 0$ a.e. in $B(r,x_0)$, hence $c$ satisfies $\Delta c = 0 $ in $B(r,x_0)$ and can not reach neither maximum, nor minimum within this ball.}
$ c \in [0,1] \text{ a.e. in }\Omega.$

Now, we will use the fact that we work with slip boundary condition, and thus we can deduce from the momentum equation the following relation for vorticity ($\omega=\curl\u$), see \cite{Mu04}
\begin{align*}
- \nu \Delta \omega = -\curl (\ro \u\cdot\nabla\u )-\curl\bigl(\Delta c  \nabla c\bigr)+\curl (\ro\ef)&\text{ in }\Omega,\\ 
 \omega\cdot\boldsymbol{\tau}_1 =- (2\chi_2-\frac{k}{\nu})\u\cdot\boldsymbol{\tau}_2&\text{ on }\partial\Omega,\\ 
  \omega\cdot\boldsymbol{\tau}_2 = (2\chi_1-\frac{k}{\nu})\u\cdot\boldsymbol{\tau}_2&\text{ on }\partial\Omega,\\
\sol \omega =0 &\text{ on }\partial\Omega,
\end{align*}  
where $\chi_n$ are the curvatures related to the vectors $\boldsymbol{\tau}_n.$ 
Note, that $\en\cdot(\nabla c\otimes\nabla c)\cdot \boldsymbol{\tau}_n=0$ on $\partial\Omega$, since $\left.\nabla c \cdot\en\right\rvert_{\partial\Omega} = 0.$
Now, we would like to show that 
$\lnorm{\nabla\omega}{p}\leq C,$ for some $p>1$. The analysis of the above system (the operator defined on the left hand side) can be found in \cite{MuPo14}.

First, we have $\left.\u\cdot \boldsymbol{\tau}\right\rvert _{\partial\Omega} \in W^{\frac{1}{2},2}(\partial\Omega)$, so we control $\omega$ on the boundary. Further,
 $\ro\ef\in L^{3\gamma-6}(\Omega)$, $\ro\u\cdot\nabla\u\in L^p(\Omega),$ for any $p\in\bigl(1,\frac{6\gamma-12}{4\gamma-6}\bigr)$, and finally
$$ \lnorm{\Delta c\nabla c}{\frac{6\gamma-12}{\gamma+2}}\leq\lnorm{\Delta c}{\frac{6\gamma-12}{\gamma+2}}\lnorm{\nabla c}{\infty}\leq C.$$
According to the fact that $1<\frac{3\gamma-6}{2\gamma-3}<\frac{6\gamma-12}{\gamma+2}$  even for all $\gamma>\frac{8}{3}$, we get 
$$\omega \in W^{1,\frac{3\gamma-6}{2\gamma-3}}(\Omega).$$

Now, we will proceed in the same manner as in \cite{MuPo06}, using the Helmholtz decomposition of the velocity field $\u=\curl \A+\nabla \phi$, define  $G= -(2\nu+\eta)\Delta \phi + p(\ro,c)$, observe
$$\nabla G = -\ro\u\cdot\nabla\u+\nu\Delta\curl \A+(\ro\ef +\Delta c \nabla c)$$
and show that $G\in W^{1,\frac{3\gamma-6}{2\gamma-3}}(\Omega)\hookrightarrow L^{\frac{3\gamma-6}{\gamma-1}}(\Omega).$ This further yields
$$\ro^\gamma\in L^{12/5}(\Omega),\quad \u\in W^{1, 12/5}(\Omega)\hookrightarrow L^{12}(\Omega)$$
 and after more iterations $\ro\in L^{\infty}(\Omega),$
$\u\in W^{1,p}(\Omega),$ $c\in  W^{1,p}(\Omega),$ for any $1<p<\infty$ as  well.
\end{proof}

\def\eps{\epsilon}
\def\ka{m}

\section{Approximation}
\label{aproxsec}

In this section we define a problem approximating the original one and prove the existence of the corresponding solutions.
We introduce $h=M/\abs{\Omega}$, $\eps>0$, a smooth  cut-off function $K(\ro)$
$$K(\ro) =\begin{cases}
  1,&\text{ for }\ro\leq \ka-1 \\
   \in(0, 1) ,&\text{ for }\ro\in (\ka-1,\ka)\\
    0,&\text{ for }\ro\geq \ka,
\end{cases}$$ 
a "regularized logarithm" which is a function $l_\eps\in C^{1}([0,\infty))$ which is bounded from below by $\log(\sqrt[t]{\eps})-1$ ($t>1$ will be specified later) and
$$ l_\eps(s)=\begin{cases}
  \log(s),&\text{ for }s\geq{\sqrt[t]{\eps}}, \\
  \text{convex,  non-decreasing}&\text{ for }s<{\sqrt[t]{\eps}},
\end{cases} $$
with\footnote{We can get such an function e.g. by replacing the logarithm by a suitable affine function for small arguments.} \begin{align}
0\leq &\,\sqrt[t]{\eps} \bigl(2 l\carka_\eps(s) +s l\carky_\eps(s)\bigr) \leq  C, \nonumber\\
0\leq &\, s \bigl(2 l\carka_\eps(s) +s l\carky_\eps(s)\bigr) \leq C \text{ for a.a. }s \in[0,\infty), \label{logar}
\end{align}where $C$ is independent of $\eps$; further we denote
the approximated free energy
$$ f_\eps(\ro,c) =\Gamma(\ro) + (ac+d)l_\eps(\ro)+  L_\eps(c) + b(c),$$
where we define $L_\eps(c)=\int\limits_{0}^c l_{\eps}(s)-l_{\eps}(1-s)\de{s}$ for $c\in[0,1]$, and then extend it to whole $\R$ as a convex function with  $\lnorm{L\carka_\eps}{\infty}\leq- C \log{\sqrt[t]{\eps}}$, $\Gamma(\ro) = \frac{\ro^{\gamma-1}}{\gamma-1}$
and approximated pressure
$$p_\eps(\ro,c)= P_b(\ro) +(ac+d)\int_0^{\ro} K(s) \partial_s{\bigl(s^{2}l\carka_\eps(s)\bigr)}\:\de{s}, $$ where
$P_b(\ro)=\int_0^{\ro}\gamma s^{\gamma-1}K(s)\:\de{s}.$

Our approximation problem then reads
\begin{align}
\eps\ro + \sol(K(\ro)\ro\u)=\eps\Delta\ro + \eps K(\ro)h, \label{epscont}
\end{align}

\vspace*{-0.8cm}

\begin{multline}
\frac{1}{2}\sol(K(\ro)\ro \u\otimes\u)+\frac{1}{2}K(\ro)\ro\u\cdot\nabla\u -\nu\Delta\u-(\nu+\eta)\nabla(\sol\u)+
\nabla p _\eps(\ro)\\= K(\ro)\ro \ef+\sol\bigl(\nabla c \otimes\nabla c -
 \frac{\abs{\nabla c}^2}{2}\Id\bigr) - a\nabla c \int_{0}^{\ro}s K\carka(s)\de{s}, \label{MEaprox}
\end{multline}

\vspace*{-0.8cm}

\begin{align}
 K(\ro)\ro\u \cdot\nabla c =& -\mu,\\
 K(\ro)\ro\mu = -\Delta c + &K(\ro) \ro\derivuj{f_\eps}{c} +  \epsilon\ro K(\ro)L\carka(c).\label{epsconc}
\end{align}
Moreover, we supply the equations with additional boundary condition \begin{equation}
 \nabla \ro\cdot\en = \boldsymbol{0}.\label{BCeps}
\end{equation}

\begin{prop} \label{paprox}
Let $\eps>0.$ Suppose that the assumptions of Theorem \ref{main} are satisfied, then there exists at least one solution $\ro_\eps,\u_\eps, \mu_\eps, c_\eps$ to the system
 \refx{epscont} - \refx{epsconc} with \refx{imper} - \refx{conserv}, \refx{BCeps}. Moreover,
we have with $1<q<+\infty$ the following estimates independent of $\eps$
\begin{equation}
\begin{split}
\lnorm{\ro_\eps}{\infty} + \norm{\u_\eps}_{W^{1,q}(\Omega)} + \eps\lnorm{\nabla\ro}{2}^{2} + \eps\lnorm{(K(\ro)\ro)^{1/2}L\carka(c)}{2} \leq C(\ka),\\
\lnorm{p_\eps}{2} +\lnorm{\mu}{2}+ \lnorm{\nabla c}{\frac{6\gamma}{3+\gamma}} + \norm{\u_\eps}_{W^{1,2}(\Omega)} + 
\lnorm{\bigl(K(\ro)\ro\bigr)^{\frac{\gamma+1}{2\gamma}} L_\eps\carka(c)}{\frac{2\gamma}{\gamma+1}} \leq C. \label{epsest} 
\end{split}
\end{equation}
\end{prop}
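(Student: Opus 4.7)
The proof will proceed in two essentially independent parts: (i) existence of a solution for each fixed $\eps>0$ via Schauder's fixed point theorem, and (ii) derivation of the bounds \refx{epsest} by an a priori argument modelled on Lemma~\ref{basic} applied to the regularized system.

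For (i), the plan is to iterate on triples $(\tilde\ro,\tilde\u,\tilde c)$ lying in a closed convex ball of $L^{\infty}(\Omega)\times W^{1,q}(\Omega,\rtri)\times W^{1,q}(\Omega)$ for some $q>3$, satisfying $0\leq\tilde\ro\leq\ka$ and $\inte{\tilde\ro}=M$. Given such a triple, first solve the linear elliptic equation
$$\eps\ro-\eps\Delta\ro+\sol\bigl(K(\tilde\ro)\tilde\ro\tilde\u\bigr)=\eps K(\tilde\ro)h$$
with the Neumann condition \refx{BCeps}. Testing successively by $-\ro^{-}$ and by $(\ro-\ka)^{+}$, and exploiting that $K(s)=0$ for $s\geq\ka$, produces the two-sided bound $0\leq\ro\leq\ka$; the mass condition $\inte{\ro}=M$ follows by integration; and elliptic regularity upgrades $\ro$ to $W^{2,p}$ for every $p<\infty$. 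Next, with $\ro$ fixed and $\tilde\u$ frozen, eliminate $\mu$ through $\mu=-K(\ro)\ro\tilde\u\cdot\nabla c$ and solve the resulting semilinear equation for $c$; the principal nonlinearities $\epsilon\ro K(\ro)L\carka(c)$ and $K(\ro)\ro L\carka_\eps(c)$ are monotone in $c$, so a standard monotone operator argument yields a unique $c\in W^{2,p}(\Omega)$. Finally, solve the linearized momentum equation \refx{MEaprox} for $\u$ by a Galerkin scheme adapted to slip boundary conditions (in the spirit of \cite{MuPo06}); the $\frac12$-symmetric form of the convective term in \refx{MEaprox} combined with \refx{epscont} keeps the linearization coercive. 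The resulting self-map is continuous and, thanks to the $\eps\Delta\ro$-induced regularity gain, compact, so Schauder's theorem furnishes a fixed point.

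For (ii), I would mimic the a priori computation of Lemma~\ref{basic} at the approximate level. Testing \refx{MEaprox} by $\u$, combining with \refx{epscont} multiplied by $f_\eps(\ro,c)+\ro\partial_\ro f_\eps$ so as to integrate by parts the pressure work against $\partial_c f_\eps\nabla c\cdot\u$, and adding the product of \refx{epsconc} with $\mu$ (after eliminating the left-hand side via the third equation) produces, up to lower-order terms absorbed by the $\eps$-dissipations, an inequality of the form
$$\inte{\abs{\nabla\u}^{2}+\mu^{2}}+\eps\inte{\abs{\nabla\ro}^{2}+K(\ro)\ro\abs{L\carka(c)}^{2}}\leq C\inte{\ro\ef\cdot\u}+C.$$
This yields the $W^{1,2}$ bound on $\u$ and the $L^{2}$ bound on $\mu$ after absorption; Lemma~\ref{koncentr} applied with $q=2$ then delivers the stated Lebesgue norms of $\nabla c$ and the weighted logarithmic term; and the pressure estimate $\lnorm{p_\eps}{2}$ is obtained by testing \refx{MEaprox} with the Bogovskii corrector $\Phib=\B[p_\eps-\{p_\eps\}_\Omega]\in W^{1,2}_{0}(\Omega)$, which is admissible precisely because $\gamma>6$ allows one to choose $\alpha\geq\gamma$ in the heuristic \refx{gammatri}. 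The $\ka$-dependent bounds of the first line of \refx{epsest} follow independently: the $L^{\infty}$ bound on $\ro$ is already built into Step~(i), $\eps\lnorm{\nabla\ro}{2}^{2}$ comes from testing \refx{epscont} by $\ro$, and the $W^{1,q}$ estimate on $\u$ comes from elliptic regularity once the right-hand side of \refx{MEaprox} has been bounded in $L^{q}$ by means of the $\eps$-dependent pointwise control on $\ro$.

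The main obstacle I foresee is twofold. First, the nonlinear cancellations of Lemma~\ref{basic} must survive the regularization: the algebraic interplay between $p_\eps$, the cutoff primitive $\int_{0}^{\ro}sK\carka(s)\de{s}$ and $\partial_c f_\eps$ has to remain exact so that the extra term $-a\nabla c\int_{0}^{\ro}sK\carka(s)\de{s}$ built into \refx{MEaprox} is precisely the correction needed to close the energy identity with $K(\ro)$ present. Second, the bounds of the second line of \refx{epsest} must be genuinely independent of both $\eps$ and $\ka$, which demands careful bookkeeping of the dependence of each constant on these parameters at every step of the argument — this is the step where the assumption $\gamma>6$ really enters, since it is what lets the Bogovskii test function absorb the $\abs{\nabla c}^{2}$ contribution on the right-hand side of \refx{MEaprox} uniformly in $\eps$.
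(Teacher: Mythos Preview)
Your overall architecture---fixed point for existence, then an energy/Bogovskii cascade for the uniform bounds---matches the paper's, but there is a genuine gap in step~(i). You freeze the density in the continuity equation, solving
\[
\eps\ro-\eps\Delta\ro+\sol\bigl(K(\tilde\ro)\tilde\ro\tilde\u\bigr)=\eps K(\tilde\ro)h,
\]
and then claim that testing by $-\ro^{-}$ and $(\ro-\ka)^{+}$ yields $0\le\ro\le\ka$ ``exploiting that $K(s)=0$ for $s\ge\ka$''. This does not work: the divergence term $\sol(K(\tilde\ro)\tilde\ro\tilde\u)$ is now a \emph{fixed} source with no sign, and the cutoff acts on $\tilde\ro$, not on the unknown $\ro$, so the set $\{\ro>\ka\}$ has nothing to do with where $K(\tilde\ro)$ vanishes. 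Neither the upper nor the lower bound survives this linearization, and without $0\le\ro\le\ka$ your Schauder map does not send the ball to itself.

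The paper avoids this by running the fixed point on $\u$ alone (in $\mathbf{M}_p$, using the Leray--Schauder version) and, for each given $\u$, solving the \emph{nonlinear} problem $\eps\ro+\sol(K(\ro)\ro\u)=\eps\Delta\ro+\eps K(\ro)h$ in $\ro$. It is precisely the presence of $K(\ro)$ (not $K(\tilde\ro)$) in both the transport and the source that makes the maximum principle go through and delivers $\ro\in[0,\ka]$ together with $\inte{\ro}=M$. The concentration equation is likewise solved nonlinearly in $c$ (via an inner fixed point), and the momentum equation is treated as a linear elliptic system for $\mathbf{w}$; boundedness of possible fixed points $\u=t\mathcal{T}(\u)$ is then obtained from the energy identity. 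A secondary point: you locate the role of $\gamma>6$ in the Bogovskii step, but in fact the $\eps$-uniform bounds of the second line of \refx{epsest} need only $\gamma>3$; the assumption $\gamma>6$ enters in the subsequent bootstrap on the pair $(\mu,\nabla c)$ (iterating H\"older in \refx{third} and Lemma~\ref{koncentr}) that pushes $\nabla c$ into $L^\infty$ and then gives $\u\in W^{1,q}$ with constants depending only on $\ka$.
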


\begin{proof}
 The existence of solutions for the approximative system will be deduced by means of the following variant of the Leray-Schauder fixed point theorem, see e.g. \cite {Ev}.
 \begin{LSFPT}
 Let $X$ be a Banach space, and $\Te$ a continuous and compact mapping $\Te:\:X \mapsto X$, such that the possible fixed points
  $x = t \Te x$, $0 \leq t \leq 1$
 are bounded in $X$. Then $\Te$ possesses a fixed point.
 \end{LSFPT}
 We define spaces
 \begin{align*}
 \mathbf{M}_q= &\left\{\mathbf{w}\in \bigl(W^{2,q}(\Omega),\R^3\bigr),\mathbf{w}\cdot\en=\boldsymbol{0} \text{ on }\partial\Omega \right\},\\
 M_q=&\left\{m\in W^{2,q}(\Omega),\nabla m\cdot\en=0 \text{ on }\partial\Omega \right\},\\
 N_q=&\left\{z\in W^{3,q}(\Omega),\:\nabla z\cdot\en=0 \text{ on }\partial\Omega \right\},
 \end{align*}
and search for $\ro\in M_p$, $\u\in\mathbf{M}_p,$ and $c\in N_p$, with $1\leq p<\infty,$  see \cite{NoSt04, Feir04} for similar considerations for the Navier-Stokes system.  Let us first concentrate on the continuity equation. 
\begin{lem}
The solution operator $\mathcal{S}_1(\u)=\ro$ of the problem
\begin{align*}
\eps\ro + \sol(K(\ro)\ro\u)=\eps\Delta\ro + \eps K(\ro)h\text{ in }\Omega,\\
\nabla\ro\cdot\en = \boldsymbol{0}\text{ on }\partial \Omega
\end{align*}
is for $p>3$ a continuous compact operator from $\mathbf{M}_p$ to $W^{2,p}.$ Moreover,
$\ro\geq 0,$ $\inte{\ro\:}\leq M,$ and $$\norm{\ro}_{W^{2,p}}\leq C(\ka,\epsilon)(1+\norm{\u}_{W^{1,p}(\Omega)}).$$
\end{lem}
\begin{proof}
The proof can be found in \cite{MuPo09}, see also \cite{MuPo06, Lasi14}. We recall here only the idea how to obtain the estimates.
First, considering the subset $\{\ro< 0\}\subset\Omega$ we get $\ro\geq0$ a.e. in $\Omega$, then integrating the approximate continuity equation over $\Omega$ yields $$ \inte{\ro}= h\inte{K(\ro)},$$ so $K(\ro) = 1 $ a.e. in $\Omega.$
Further, testing the continuity equation by $\ro$ yields
\begin{equation*}
\begin{split}
\eps\inte{\ro^{2}+\abs{\nabla \ro}^{2}} -\eps\inte{K(\ro)\ro h }\leq & -\inte{\ro \sol\bigl(K(\ro) \ro\u\bigr) } = \inte{\u\cdot\nabla\ro K(\ro)\ro}\\
 =& \inte{ \u\cdot\nabla\Bigl( \int_0^\ro K(s)s \de{s}\Bigr)} \leq C\lnorm{K(\ro)\ro^{2}}{2}\lnorm{\nabla\u}{2},
\end{split}
\end{equation*}
since the last term on the left hand side can be easily bounded we get
\begin{equation}
\eps\lnorm{\nabla\ro}{2}^2\leq C\bigl(1+\lnorm{K(\ro)\ro^{2}}{2}\lnorm{\nabla\u}{2}\bigr).\label{kontin}
\end{equation} 
\end{proof}

Similarly, for the last two equations we have 
\begin{lem}
The solution operator $\mathcal{S}_2(\u)= c$ of the problem
\begin{align*}
-\mu  = & \:K(\ro)\ro\nabla c\cdot \u,\\
 -\Delta c +K(\ro)\ro\epsilon L\carka(c) =&\:K(\ro)\ro\mu - K(\ro)\ro\derivuj{f_\eps}{c}\text{ in }\Omega\text{, where }\ro = \mathcal{S}_1(\u), \\
\nabla c\cdot\en  = & \:\boldsymbol{0}\text{ on }\partial \Omega
\end{align*}
is for $p>3$ a well-defined compact operator from $\mathbf{M}_p$ to $ N_p.$ Moreover,
$c\in [0,1],$ and $$\norm{c}_{W^{2,\frac{2q}{2+q}}} + \lnorm{\nabla c}{\frac{6q}{6+q}}\leq C (\lnorm{\mu}{2} \lnorm{K(\ro)\ro}{q}+1).$$
\end{lem}
\begin{proof}
The proof is quite similar to the previous Lemma, for the estimates we proceed analogously as in the proof of Lemma \ref{koncentr}. 
For constructing the solution we use again Schauder fixed point theorem. We consider for fixed $\ro\in M_p$ the mapping $c\to z$ defined as a solution operator to the problem
\begin{align}
-\mu =& \:K(\ro)\ro\nabla c\cdot \u,\nonumber\\
 -\Delta z +K(\ro)\ro\epsilon L\carka(z) =&\:K(\ro)\ro\mu - K(\ro)\ro\derivuj{f_\eps}{c}(\ro,c),\label{neumann}\\
\nabla z\cdot\en = & \:\boldsymbol{0}\text{ on }\partial \Omega.\nonumber
\end{align}
The second  equation is for $\epsilon>0$ strictly elliptic,  furthermore, its right hand side belongs 
to $W^{1,p}(\Omega)\hookrightarrow L^{\infty}(\Omega)$, in particular we  deduce that $K(\ro) \ro L\carka(c)\in L^\infty(\Omega)$ and $\nabla c\in W^{1,\infty}(\Omega)$,
 hence since $\ro$ and $z$ are continuous we find $K(\ro) \ro L\carky(z)\in L^\infty(\Omega)$, put the corresponding term to the right hand side, observe that in fact 
$K(\ro)\ro\mu - K(\ro)\ro\derivuj{f_\eps}{c}-K(\ro)\ro\epsilon L\carka(z)\in W^{1,p}(\Omega)$, and so $z\in W^{3,p}(\Omega)$, and the corresponding mapping is compact by the same reasons as above.
In order to get the desired estimate independent of $\eps$, we test the equation
$-\Delta c + K(\ro)\ro\bigl( L\carka_\eps(c) + \eps L\carka(c)  \bigr) = K(\ro)\ro\mu - K(\ro)\ro 
\bigl( b\carka(c) + a l_\eps(\ro) \bigr)$ by $F(L\carka_\eps(c) +\eps L\carka(c))$, where $F$ is increasing function such that  $\ksi F(\ksi)\sim\abs{\ksi}^{\beta+1}$,
 $\abs{F}\sim \abs{\ksi}^{\beta},$ for some $\beta>0.$ Note that $L\carka_\eps(c) +\eps L\carka(c)$ is non-decreasing function, so we get
\begin{equation}
\begin{split}
&\eps\lnorm{(K(\ro)\ro)^{1/(\beta+1)}L\carka(c)}{\beta+1} ^{\beta+1} + \lnorm{(K(\ro)\ro)^{1/(\beta+1)}L_\eps\carka(c)}{\beta+1} ^{\beta+1}\\
 &\qquad\leq\inte{\abs{\nabla c}^2 F\carka\bigl(L\carka_\eps(c) +\eps L\carka(c)\bigr)\bigl( L\carky_\eps(c) +\eps L\carky(c)\bigr) +K(\ro)\ro \bigl(L\carka_\eps(c)
+\eps L\carka(c)\bigr)F\bigl(L\carka_\eps(c) +\eps L\carka(c)\bigr) }\\
 &\qquad\leq\abs{ 
\inte{K(\ro)\ro\mu F\bigl(L\carka_\eps(c) +\eps L\carka(c)\bigr)}} +\abs{\inte{  K(\ro)\ro \bigl( b\carka(c) + a l_\eps(\ro) \bigr) F\bigl(L\carka_\eps(c) +\eps L\carka(c)\bigr) }},
\end{split}
\end{equation} 
hence \begin{multline}\eps\lnorm{(K(\ro)\ro)^{\frac{1}{\beta+1}}L\carka(c)}{\beta+1}  + \lnorm{(K(\ro)\ro)^{\frac{1}{\beta+1}}L_\eps\carka(c)}{\beta+1}\\
\leq C(\lnorm{\mu}{p} \lnorm{K(\ro)\ro}{\frac{p}{p-(\beta+1)}}+1)\label{Leps} \end{multline}
and using the classical elliptic estimates we obtain as in Lemma \ref{koncentr}
$$\norm{c}_{W^{2,\frac{pq}{p+q}}} + \lnorm{\nabla c}{\frac{3pq}{3p+3q-pq}}\leq C (\lnorm{\mu}{p} \lnorm{K(\ro)\ro}{q}+1), $$
especially for $p=2$,
$$\norm{c}_{W^{2,\frac{2q}{2+q}}} + \lnorm{\nabla c}{\frac{6q}{6+q}}\leq C (\lnorm{\mu}{2} \lnorm{K(\ro)\ro}{q}+1).$$
The issue of existence of the solutions to \refx{neumann} requires some comments. The function $L\carka(\cdot)$ is singular and it keeps the value of $z$ in the interval $[0,1]$. Thus, we approximate  \refx{neumann} by its regularization substituting $L\carka(\cdot)$ by  $L\carka_\delta(\cdot)$ which is obtained in the same manner as for $f_\eps$ in \refx{logar}. The estimates are the same, there is no problem to pass to the limit $\delta\to0,$ since we control the second derivatives of $z.$ Hence, we ensure that $z\in[0,1]$ as well.
\end{proof}

Finally, we define the solution operator $\mathcal{T}:\mathbf{M}_p \to \mathbf{M}_p$, $\mathcal{T}(\u) = \mathbf{w}$ of the problem
\begin{align}
 -\nu\Delta\mathbf{w}-(\nu+\eta)\nabla(\sol\mathbf{w})=&
 -\frac{1}{2}\sol(K(\ro)\ro \u\otimes\u)-\frac{1}{2}K(\ro)\ro\u\cdot\nabla\u -\nabla P _\epsil(\ro )+ K(\ro)\ro \ef\nonumber\\ 
  &\qquad\quad\qquad+\sol\bigl(\nabla c\otimes\nabla c-\frac{\abs{\nabla c}^2}{2}\Id\bigr) - a\nabla c \int_{0}^{\ro}s K\carka(s)\de{s},\label{wellipt}\\
  &\qquad\qquad\qquad\text{ where }\ro = \mathcal{S}_1(\u)\text{, and } c= \mathcal{S}_2(\u)\nonumber
  \end{align}
equipped with the boundary condition
  \begin{gather*}
 \mathbf{w}\cdot\en =0,\quad
    \en\cdot \T(\mathbf{w})\cdot \boldsymbol{\tau_n} +k\mathbf{w} \cdot\boldsymbol{\tau} =0,\text{ on }\partial\Omega.
  \end{gather*}
\begin{lem}
$\mathcal{T}$ is continuous and compact operator from $\mathbf{M}_p$ to $\mathbf{M}_p$ for $p>3$.
\end{lem}
\begin{proof}[Idea of the proof.]
It is again one more time strictly elliptic system with  right hand side which belongs at least to the $L^p(\Omega)$ and it contains at most first order derivatives of $\ro,\u$ and 
at most second order derivatives of $c$, see e.g. \cite{NoSt04} for similar considerations.
\end{proof}

Finally, we will verify that all possible solutions of $t \mathcal{T}(\mathbf{\u})=\mathbf{\u}$, for $t\in[0,1]$ are bounded in $\mathbf{M}_p$ independently of $t$.
Testing the momentum equation by $\u$ yields (with usage of the last equation tested by $\u\cdot\nabla c$)
\begin{multline}t\inte{\u\cdot\nabla p_\eps(\ro,c)\:}+ \inte{\nu\abs{\nabla\u}^2 + (\nu+\eta)\abs{\sol\u}^2} + 
\int\limits_{\partial\Omega} k (\abs{\u_1\cdot \boldsymbol{\tau}_1}^2+ \abs{\u_2\cdot \boldsymbol{\tau}_2}^2)\de{S}\\
\leq t \inte{K(\ro)\ro\ef\cdot\u}+t\inte{\Bigl(K(\ro)\ro\mu\nabla c\cdot \u-K(\ro) \ro \derivuj{f_\eps}{c}\nabla c \cdot \u\Bigr)}\\
+t\inte{\Bigl(-\eps\ro K(\ro)L\carka(c)\nabla c\cdot\u -a\nabla c\cdot\u \int_{0}^{\ro}s K\carka(s)\de{s}\Bigr)},\label{base}
\end{multline}
from the equation for concentration we get
$\inte{\mu^2}=-\inte{K(\ro)\ro\mu\u\cdot\nabla c\,}.$
Next,
\begin{multline}
\inte{\u\cdot\nabla p_\eps(\ro,c)\:} = \inte{\u\cdot \nabla \ro\Bigl( \gamma \ro^{\gamma-1} K(\ro) + (ac+d)K(\ro) \frac{\de{}}{\de{\ro}} \bigl(\ro^{2}l\carka_\eps(\ro) \bigr) \Bigr)}\\ 
+ \inte{a\nabla c\cdot\u \int_{0}^{\ro} K(s) \frac{\de{}}{\de{s}}\bigl(s^{2}l\carka_\eps(s)\bigr)\de{s}\:}\label{tlak}
\end{multline}   
and
\begin{equation}
\begin{split}
&\inte{K(\ro)\ro \u\cdot\nabla c \derivuj{f_\eps}{c}(\ro,c)}=\inte{K(\ro)\ro \u\cdot\nabla\bigl(f_\eps + \ro\derivuj{f_\eps}{\ro}\bigr)} - \inte{ K(\ro)\ro^{2} a l_\eps\carka(\ro) \u\cdot\nabla c\:}\\
& \qqquad-\inte{K(\ro)\ro \u\cdot\nabla\ro\Bigl( 2\Gamma\carka(\ro) + \ro \Gamma\carky(\ro)+(ac+d)l_\eps\carka(\ro) +(ac+d)\frac{\de{}}{\de{\ro}}\bigl(\ro l_\eps\carka(\ro)\bigr) \Bigr) }\\
& \qqquad\qqquad\qqquad=\inte{\sol\Bigl(K(\ro)\ro \u\bigl(f_\eps + \ro\derivuj{f_\eps}{\ro}\bigr)  \Bigr)} - \inte{\sol\bigl(K(\ro)\ro\u\bigr) \bigl(f_\eps + \ro\derivuj{f_\eps}{\ro}\bigr)  } \label{efko} \\
& \qqquad\qqquad\qqquad\qqquad - \inte{a\nabla c\cdot\u \int_{0}^{\ro} \frac{\de{}}{\de{s}} \bigl( K(s) s^{2}l\carka_\eps(s)\bigr)\de{s}\:} \\
& \qqquad\qqquad\qqquad\qqquad\qqquad + \inte{K(\ro)\u\cdot\nabla \ro 
\frac{\de{}}{\de{\ro}} \bigl( \ro^{2}\Gamma\carka(\ro) + \ro^{2}l\carka_\eps(\ro)(ac+d) \bigr)}.
\end{split}
\end{equation}
The first term on the right hand side of \refx{efko} can be eliminated by boundary conditions, for the second one we will use the continuity equation 
(tested by $(f_\eps+\ro\derivuj{f_\eps}{\ro})$). The third one  cancels out with the last terms of \refx{base} and \refx{tlak}, and the last term is exactly
 the same as the main part of \refx{tlak}.

Thus,  summing up the resulting inequalities with appropriate powers of $t$ we get 
\begin{multline}
\norm{\u}_{\sob}^2+t\lnorm{\mu}{2}^2  +t\inte{\epsilon\ro\bigl(f_\eps+\ro\derivuj{f_\eps}{\ro}\bigr)}
 \leq t\inte{K(\ro)\ro\ef\cdot\u} - t\intek{\eps\ro K(\ro) L\carka(c) \u\cdot\nabla c }\\ 
  +t\inte{\bigl(\epsilon\Delta\ro + \epsilon K(\ro)h \bigr)\bigl(f_\eps +\ro\derivuj{f_\eps}{\ro}\bigr)   },\label{energie}
\end{multline}
where we have introduced the notation $\Omega_K = \{K(\ro)\ro>0\}\cap\Omega.$
We  estimate the $\eps$-dependent terms.
\begin{equation}
\begin{split}
&-\eps\intek{K(\ro)\ro L\carka(c)\nabla c\cdot\u}=-\intek{\eps K(\ro)\ro\u\cdot\nabla \bigl(L(c)\bigr)}\\&\qqquad = \int\limits_{\partial\Omega_K} \eps \ro K(\ro) \u\cdot\en L(c)\de{S} 
+ \intek{\eps\sol(K(\ro)\ro\u)L(c) }\\&\qqquad
\leq \sqrt{\eps}\lnorm{\nabla\ro}{2}\lknorm{\ro K\carka(\ro) + K(\ro)}{3}\lnorm{\u}{6}\sqrt[4]{\eps}\lknorm{L(c)}{\infty}\sqrt[4]{\eps}\\&\qqquad\qqquad\qqquad\qqquad
+\lnorm{K(\ro)\ro}{2}\lnorm{\nabla\u}{2}\sqrt[4]{\eps}\lknorm{L(c)}{\infty} \eps^{3/4}
\end{split}
\end{equation}
and we can use $\sqrt[4]{\eps}\lknorm{L(c)}{\infty} \leq C( \sqrt[4]{\eps}\lnorm{ (K(\ro)\ro)^{1/(\beta+1)}  L\carka(c)}{\beta+1}^{1/4} +1)$ with estimate \refx{Leps}.

Concerning the term $\eps K(\ro)(f_\eps +\ro\derivuj{f_\eps}{\ro})$ we have three parts. First, $\Gamma_\eps(\ro) + \ro\Gamma_\eps\carka(\ro)$, 
which can be bounded by the corresponding term on the left hand side, see $\eps(\ro f_\eps).$ Second, $(ac+d)\bigl(1+l_\eps(\ro)\bigr)$, which has good sign on 
the set $\{\ro\leq \mathrm{e}^{-1}\}$ and is bounded by $c(1+\ro) K(\ro)$ on $\{\ro> \mathrm{e}^{-1}\}.$ The third term $\eps \bigl(L_\eps(c)+ b(c)\bigr)$ can be bounded 
according to our definition of $l_\eps$. For $\Delta\ro(f_\eps +\ro\derivuj{f_\eps}{\ro})$ we have
\begin{multline}
\inte{\eps \Delta\ro(f_\eps +\ro\derivuj{f_\eps}{\ro})} =  - \eps\inte{ \abs{\nabla\ro}^2 \Bigl(\gamma\ro^{\gamma-2} +(ac+d) \bigl(\ro l_\eps\carky(\ro)+2l_\eps\carka(\ro) \bigr)\Bigr) } \\
 -\eps\inte{ \nabla\ro\cdot\nabla c \bigl( l_\eps\carka(\ro)\ro + b\carka(c) + L\carka_\eps(c)\bigr)}.
\end{multline}
The first integral has a good sign. Indeed, $\ro^{\gamma-2}$ as well as $\ro l_\eps\carky(\ro)+2l_\eps\carka(\ro)$ are non-negative (for large arguments we have 
$\ro l_\eps\carky(\ro)+2l_\eps\carka(\ro) = \frac{1}{\ro}\geq 0 $, in the other case the conclusion is obtained from the fact that for small arguments  $l_\eps$ is increasing and convex, 
see \refx{logar}). Note also that  we have \refx{kontin} and for $\eps>0$ the approximated version of \refx{system} yields $c\in[0,1]$ as soon as we control $\lnorm{K(\ro)\ro}{2\gamma}\lnorm{\mu}{2}$. 
For the rest we get denoting $V_\eps(c) =  b\carka(c) +L_\eps\carka(c)$  and $U = \norm{\u}_{\sob}^2+t\lnorm{\mu}{2}^2$ that
\begin{multline}
t\inte{\eps \abs{\nabla{\ro}}\abs{\nabla c}\abs{ l_\eps\carka(\ro)\ro +V_\eps(c)}   }\leq t \sqrt[4]{\eps} \lnorm{\sqrt{\eps}\nabla\ro}{2}\lnorm{\nabla c}{2}\sqrt[4]{\eps}\bigl(
\lnorm{ l_\eps\carka(\ro)\ro+V_\eps(c)}{\infty}\bigr)\\
\leq t\sqrt[4]{\eps}C\lnorm{K(\ro)\ro^2}{2}^{1/2}\lnorm{\nabla\u}{2}^{1/2}\lnorm{K(\ro)\ro}{\gamma}\lnorm{\mu}{2} \\ 
\leq t  \sqrt[4]{\eps}C\lnorm{K(\ro)\ro^2}{2}^{1/2}\lnorm{K(\ro)\ro}{\gamma}U^{3/4},
\end{multline}
where we have used the choice of $l_\eps$ which guarantees that $\sqrt[4]{\eps}
\lnorm{ l_\eps\carka(\ro)\ro}{\infty} +\sqrt[4]{\eps} \lnorm{V_\eps(c)}{\infty}\bigr)\leq C$ independently of $\eps$.
 Thus,
 \begin{multline*}
 U\leq C \:\Bigl(1+\lnorm{K(\ro)\ro}{6/5}^2 + \eps\lnorm{K(\ro)\ro}{2} U^{3/4} + \sqrt[4]{\eps}\lnorm{K(\ro)\ro^2}{2}^{1/2}\lnorm{K(\ro)\ro}{3}U^{3/4}  \Bigr).
 \end{multline*}
However $\lnorm{K(\ro)\ro}{6}$ is definitely finite so finally,
$$U\leq C\Bigl(1+\lnorm{K(\ro)\ro}{6/5}^2),\qqquad\lnorm{\nabla c}{\frac{6q}{6+q}}\leq C(\lnorm{K(\ro)\ro}{q}\lnorm{\mu}{2} +1). $$
The Bogowskii estimates go along exactly the same lines as in the a priori approach as soon as we observe that
\begin{multline}
\inte{a\u\cdot\nabla c \int_0^{\ro}t K\carka(t)\de{t} \Phib }\\ 
\leq C\lnorm{\u}{6}\lnorm{\nabla c}{\frac{6\gamma}{3+\gamma}}\lnorm{K(\ro)\ro}{\frac{6\gamma}{3\gamma-3}}\lnorm{\Phib}{6}\leq 
 \lnorm{K(\ro)\ro}{2\gamma}^{1+\gamma+\frac{2\gamma} {3(2\gamma-1)}},
\end{multline}
so $$\lnorm{\ro_\eps}{3\gamma-6}\leq C,$$
independently of $\ka$ and  $\eps$. Furthermore, by the same iteration process applied on the last two equations of \refx{RK}-\refx{system} as in the a priori approach we can deduce that
\begin{equation} \lnorm{\mu_\eps}{\frac{6\gamma}{\gamma+3}} + \lnorm{\nabla c_\eps}{\infty}+ \lnorm{\Delta c_\eps}{\frac{6\gamma}{6+\gamma}}\leq C .\label{estcon}\end{equation}
Having these estimates in hands and noting that $\lnorm{K(\ro)\ro}{\infty}\leq C(m)$ we can apply the elliptic theory on the equation \refx{wellipt} and get the estimate of fixed points of $\Te$ in $\mathbf{M}_p$. 
This completes the proof of Proposition \ref{paprox}.
\end{proof}

\section{Artificial diffusion limit}

The last section of the paper is dedicated to the proof of convergence of the constructed approximative solutions to a weak solution to the original system. As usual the key part
is related to the proof of the strong convergence of the densities.

Thanks to the estimates \refx{epsest}, \refx{estcon} we  extract from the family $(\ro_\epsilon, \u_\epsilon, \mu_\epsilon, c_\epsilon)$ subsequences which converge in the corresponding spaces 
as $\epsilon\to 0+.$ Namely,\footnote{We denote a weak limit of nonlinear expressions $\{ w_{\epsilon}(\ro_\eps,\u_\eps,\mu_\eps, c_\eps)\}$ by $\overline{w(\ro,\u,\mu, c)}.$}
\begin{align*}
\u_\eps \weak\: &\u \text{ in }W^{1,q}(\Omega),&\u_\eps \to\: &\u \text{ in }L^{\infty}(\Omega), \\
\ro_\eps \weaks\:&\ro \text{ in }L^{\infty}(\Omega),&
p_\eps(\ro,c) \weaks\:& \overline{p(\ro,c)}\text{ in }L^{\infty}(\Omega), \\
K(\ro_\eps)\ro_\eps \weaks\:&\overline{K(\ro)\ro} \text{ in }L^{\infty}(\Omega),&
K(\ro_\eps)\weaks\:&\overline{K(\ro)} \text{ in }L^{\infty}(\Omega),\\
\int_{0}^{\ro_\eps}t K\carka(t)\de{t} \weaks\:&\overline{\int_{0}^{\ro}t K\carka(t)\de{t}} \text{ in }L^{\infty}(\Omega),&
h_\eps(\ro_\eps)\weaks\:& \overline{h_\eps(\ro)} \text{ in }L^{\infty}(\Omega),\\
c_\eps \weak\: &c \text{ in }W^{2,2}(\Omega),&\nabla c_\eps \to\: &\nabla c \text{ in }L^{6}(\Omega), \\
\eps K(\ro_\eps)\ro_\eps L\carka(c_\eps) \weak\:& \overline{\eps K(\ro)\ro }L\carka(c) \text{ in }L^{2}(\Omega),&
 K(\ro_\eps)\ro_\eps L_\eps\carka(c_\eps) \weak\:& \overline{K(\ro)\ro }L\carka(c) \text{ in }L^{12/7}(\Omega), \\
\mu_\eps \weak\:&\mu \text{ in }L^{2}(\Omega),&
K(\ro_\eps)\ro_\eps\mu_\eps\weak\:& \overline{K(\ro)\ro\mu}  \text{ in }L^{2}(\Omega).
\end{align*}
  Thus, we get
 \begin{align*}
\sol(\overline{K(\ro)\ro}\u)= &0, \\
\frac{1}{2}\sol(\overline{K(\ro)\ro} \u\otimes\u)+\frac{1}{2}\overline{K(\ro)\ro}&\:\u\cdot\nabla\u -\nu\Delta\u-(\nu+\eta)\nabla(\sol\u)+\nabla\overline{ p _\eps(\ro,c)}\\
&= \overline{K(\ro)\ro} \ef +\sol\bigl(\nabla c\otimes\nabla c-\frac{\abs{\nabla c}^2}{2}\Id\bigr) - a\nabla c \overline{\int_{0}^{\ro}t K\carka(t)\de{t}},\\
  \overline{K(\ro)\ro}\:\u\cdot\nabla c=& -\mu,\\
  \overline{K(\ro)\ro\mu} =& -\Delta c +\overline{h_\eps(\ro)} +\overline{K(\ro) \ro} \bigl(L\carka(c) + b\carka(c)\bigr) + \overline{\eps K(\ro)\ro } L\carka(c),\\
  \u\cdot\en =\: & 0,\quad\quad \nabla c\cdot\en = 0,\\
  \en\cdot \T(c,\u)\cdot\boldsymbol{\tau}_n&+k\u\cdot\boldsymbol{\tau}_n =0,\text{ on }\partial\Omega,
  \end{align*}
 where $ h_\eps(s) = a s \cdot l_\eps(s).$  Note that, due to the high regularity we have the pointwise convergence of concentrations. In order to show the pointwise convergence
 of  densities we need to investigate the momentum equation, especially its potential part defining the effective viscous flux. Let us decompose the velocity field 
$\u$ using Helmholtz decomposition, i.e.
 $$\u = \nabla\mfi + \curl \A, $$
 where \begin{align*}
 \curl\curl \A = \curl \u =\: &\omega\text{ in }\Omega,& &\Delta\mfi = \curl \u \text{ in }\Omega,\\
 \sol\curl\A =\: & 0\text{ in }\Omega,&\text{and}\qquad\qqquad & \nabla\mfi\cdot\en = 0\text{ on }\partial\Omega,\\
 \curl\A\cdot\en =\: & 0\text{ on }\partial\Omega,& &\inte{\mfi\:} =  0.
 \end{align*}
For the stream function $\A$ we have good estimates
$$ \lnorm{\nabla\curl\A}{q}\leq \lnorm{\omega}{q},\qquad   \lnorm{\nabla^2\curl\A}{q}\leq \norm{\omega}_{W^{1,q}(\Omega)},$$
 and since $\omega$ solves
\begin{align}
-\nu \Delta \omega = -\curl \bigl(\overline{K(\ro)\ro} \u\cdot\nabla\u\bigr)+\curl \bigl(\overline{K(\ro)\ro} \ef\bigr)
 +\curl\Bigl(\Delta c \nabla c - a\nabla c \overline{\int_{0}^{\ro}t K\carka(t)\de{t}}\Bigr),
\end{align}
we have $\norm{\omega}_{W^{1,q}(\Omega)}\leq C(\ka).$ Similarly we also decompose the approximative velocity field as $\u_\eps = \nabla\mfi_\eps + \curl \A_\eps $ and deduce due to the slip boundary 
conditions the following problem for vorticity
\begin{equation*}
\begin{split}
-\nu \Delta \omega_\eps =& \curl \Bigl(K(\ro_\eps)\ro_\eps \ef-K(\ro_\eps)\ro_\eps \u_\eps\cdot\nabla\u_\eps -\frac{1}{2}\eps h K(\ro_\eps)\u_\eps   \Bigr.\\& 
\underbrace{\qquad\qqquad\quad
\Bigl.+\eps \frac{1}{2}\ro_\eps\u_\eps+\Delta c_\eps \nabla c_\eps - a\nabla c_\eps \int_{0}^{\ro_\eps}t K\carka(t)\de{t}\Bigr) }\limits_{=:\Ha_1} 
\underbrace{-\curl \bigl(\frac{1}{2}\eps\Delta\ro_\eps \u_\eps  \bigr)}\limits_{=:\Ha_2} =: \Ha_1 + \Ha_2, \\&\qqquad
 \omega_\eps\cdot\boldsymbol{\tau}_1 =- (2\chi_2-\frac{k}{\nu})\u_\eps\cdot\boldsymbol{\tau}_2\text{ on }\partial\Omega,\\ &\qqquad
  \omega_\eps\cdot\boldsymbol{\tau}_2 = (2\chi_1-\frac{k}{\nu})\u_\eps\cdot\boldsymbol{\tau}_2\text{ on }\partial\Omega,\\
  &\qquad\quad\sol\omega_\eps=0\text{ on }\partial\Omega.
\end{split}
\end{equation*}
The structure of the right hand side of the first equation enables us to consider $\omega_\eps =\omega_\eps^0+\omega_\eps^1+\omega_\eps^2 $ as a sum of solutions to three particular systems, namely
\begin{align}
\nu\Delta\omega_\eps^0 =& 0, & \nu\Delta\omega_\eps^1 =&\Ha_1, &\nu\Delta\omega_\eps^2 =\Ha_2 \:\text{ in }\Omega,\nonumber\\
 \omega_\eps^0\cdot\boldsymbol{\tau}_1 =&- (2\chi_2-\frac{k}{\nu})\u_\eps\cdot\boldsymbol{\tau}_2 ,&\omega_\eps^1\cdot\boldsymbol{\tau}_1 = & 0,
& \omega_\eps^2\cdot\boldsymbol{\tau}_1=0 \:\text{ on }\partial\Omega,\nonumber\\
  \omega_\eps^0\cdot\boldsymbol{\tau}_2 =&- (2\chi_1-\frac{k}{\nu})\u_\eps\cdot\boldsymbol{\tau}_1 ,&\omega_\eps^1\cdot\boldsymbol{\tau}_2 = & 0,
& \omega_\eps^2\cdot\boldsymbol{\tau}_2=0 \:\text{ on }\partial\Omega,\nonumber\\
  \sol\omega_\eps^0 = & 0, &   \sol\omega_\eps^1 = & 0,&   \sol\omega_\eps^2 = 0  \:\text{ on }\partial\Omega.\label{omeg}
\end{align}

\begin{lem}\label{vort}
If the vorticity $\omega_\eps =\omega_\eps^0+\omega_\eps^1+\omega_\eps^2 $ solves \refx{omeg}, then we have
\begin{align}
\norm{\omega_\eps^0}_{W^{1,q}(\Omega)} + \norm{\omega_\eps^1}_{W^{1,q}(\Omega)}\leq & \: C\bigl( 1 + \ka^{1+ \gamma (\frac{4}{3}-\frac{2}{q})}  \bigr),\:\text{ for } q\in \left[2,\frac{6\gamma}{6+\gamma}\right],\\
\lnorm{\omega_\eps^2}{q}\leq & \: C(\ka)\eps^{1/2}, \:\text{ for } q\in[1,2].
\end{align}
\end{lem}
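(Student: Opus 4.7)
All three subsystems in \refx{omeg} are elliptic boundary-value problems for $-\nu\Delta$ with slip-type ``tangential components and divergence'' boundary conditions, for which the regularity theory of \cite{MuPo14} applies. For $\omega_\eps^0$ the equation is homogeneous and the boundary data are linear combinations of the tangential traces of $\u_\eps$, so elliptic regularity combined with the trace theorem gives $\|\omega_\eps^0\|_{W^{1,q}(\Omega)}\leq C\|\u_\eps\|_{W^{1,q}(\Omega)}$. The required $W^{1,q}$-bound on $\u_\eps$ with the claimed exponent of $\ka$ is produced from the Helmholtz decomposition $\u_\eps=\nabla\mfi_\eps+\curl\A_\eps$: the potential part is governed by the effective viscous flux and the pressure, of pointwise order $\ka^\gamma$ through the cut-off, while the rotational part is fed back from the vorticity bound, so the estimate closes iteratively, each step contributing a power of $\ka$ until saturation at the prescribed exponent.

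For $\omega_\eps^1$ the boundary data vanish and $\Ha_1=\curl G_1$, where $G_1$ collects the non-pressure right-hand side of the momentum equation; hence elliptic regularity yields $\|\omega_\eps^1\|_{W^{1,q}(\Omega)}\leq C\|G_1\|_{L^q(\Omega)}$. One now estimates each summand of $G_1$. The linear force and the two $\eps$-small drift terms are controlled via $\|K(\ro_\eps)\ro_\eps\|_{L^\infty}\leq\ka$; the cut-off residue $a\nabla c_\eps\int_0^{\ro_\eps}sK\carka(s)\de{s}$ is pointwise of order $\ka|\nabla c_\eps|$, hence controlled through $\nabla c_\eps\in L^\infty$; the capillary contribution $\Delta c_\eps\nabla c_\eps$ lies in $L^{6\gamma/(6+\gamma)}$ by \refx{estcon}, and this is precisely what imposes the upper endpoint $q\leq 6\gamma/(6+\gamma)$. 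The delicate term is the convective one $K(\ro_\eps)\ro_\eps\u_\eps\cdot\nabla\u_\eps$: splitting $K(\ro)\ro=(K(\ro)\ro)^{1-\theta}(K(\ro)\ro)^\theta$ and applying H\"{o}lder, one balances the $L^\infty$-bound $\ka$ against the uniform $L^{3\gamma-6}$-estimate on $\ro_\eps$ and the iteratively improved $W^{1,r}$-regularity of $\u_\eps$, yielding the exponent $1+\gamma(4/3-2/q)$.

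For $\omega_\eps^2$, directly applying elliptic regularity to $\Ha_2=-\tfrac{\eps}{2}\curl(\Delta\ro_\eps\u_\eps)$ would force us to bound $\eps\Delta\ro_\eps$ in some function space, losing the crucial factor $\sqrt\eps$. I proceed instead by duality: for $\psi\in L^{q'}(\Omega)$ with $1/q+1/q'=1$, let $\Phi$ solve the adjoint problem $-\nu\Delta\Phi=\psi$ with the same (homogeneous) boundary data, so that elliptic regularity gives $\|\Phi\|_{W^{2,q'}(\Omega)}\leq C\|\psi\|_{L^{q'}(\Omega)}$. Integrating by parts twice, using the vanishing boundary data of $\omega_\eps^2$ together with the Neumann condition $\nabla\ro_\eps\cdot\en=0$, yields
\begin{equation*}
\inte{\omega_\eps^2\cdot\psi}=\frac{\eps}{2}\inte{\nabla\ro_\eps\cdot\nabla\bigl(\u_\eps\cdot\curl\Phi\bigr)},
\end{equation*}
which is bounded by $C\eps\|\nabla\ro_\eps\|_{L^2}\|\u_\eps\|_{W^{1,\infty}}\|\Phi\|_{W^{2,2}}$; invoking the approximative bound $\sqrt\eps\|\nabla\ro_\eps\|_{L^2}\leq C(\ka)$ from \refx{epsest} gives $\|\omega_\eps^2\|_{L^2}\leq C(\ka)\sqrt\eps$, and the range $q\in[1,2]$ follows by H\"{o}lder on the bounded domain.

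The principal obstacle is the convective term in $G_1$: matching the precise polynomial exponent $1+\gamma(4/3-2/q)$ demands a careful H\"{o}lder optimization simultaneously coupling the pointwise $L^\infty$-bound $\ka$, the uniform $L^{3\gamma-6}$-estimate on $\ro_\eps$, and the bootstrap improvement of $\u_\eps$ obtained by iterating elliptic theory on the momentum equation through the Helmholtz decomposition. The duality step for $\omega_\eps^2$ is the other crucial idea, as it converts the dangerous factor $\eps\Delta\ro_\eps$ into $\eps\nabla\ro_\eps$ and extracts the $\sqrt\eps$ smallness from the energy-type estimate on $\ro_\eps$.
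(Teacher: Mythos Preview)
Your overall architecture is correct and matches the paper: elliptic regularity from \cite{MuPo14} for all three pieces, with $\omega_\eps^0$ controlled by $\|\u_\eps\|_{W^{1,q}}$, $\omega_\eps^1$ by $\|G_1\|_{L^q}$ via $\|\Ha_1\|_{W^{-1,q}}$, and $\omega_\eps^2$ by a negative-order estimate that trades $\eps\Delta\ro_\eps$ for $\eps\nabla\ro_\eps$ and extracts $\sqrt\eps$ from \refx{epsest}. Your duality argument for $\omega_\eps^2$ is exactly the $W^{-1,q}$ estimate the paper writes in one line.

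Where you diverge from the paper is in the convective term and in the bound on $\|\u_\eps\|_{W^{1,q}}$, and here your plan is more complicated than needed. The paper does \emph{not} split $K(\ro_\eps)\ro_\eps$ between $L^\infty$ and $L^{3\gamma-6}$, nor does it iterate through the Helmholtz decomposition. Instead it uses the trivial pointwise bound $K(\ro_\eps)\ro_\eps\leq\ka$ once and puts all the work on $\u_\eps$: since $\|p_\eps\|_{L^2}\leq C$ uniformly and $\|p_\eps\|_{L^\infty}\leq C\ka^\gamma$, interpolation gives $\|p_\eps\|_{L^q}\leq C\ka^{\gamma(1-2/q)}$, and elliptic theory on the momentum equation then yields $\|\u_\eps\|_{W^{1,q}}\leq C(1+\ka^{\gamma(1-2/q)})$ in a single step, no bootstrap. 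For the convective term itself, the paper writes $\|K(\ro_\eps)\ro_\eps\u_\eps\cdot\nabla\u_\eps\|_{L^q}\leq\ka\|\u_\eps\|_{L^\infty}\|\nabla\u_\eps\|_{L^q}$ and then applies two interpolations: Gagliardo--Nirenberg $\|\u_\eps\|_{L^\infty}\leq C\|\u_\eps\|_{L^6}^{(2r-6)/(3r-6)}\|\nabla\u_\eps\|_{L^r}^{r/(3r-6)}$ with $r=6\gamma/(6+\gamma)$, and $\|\nabla\u_\eps\|_{L^q}$ between $L^2$ and $L^r$. Substituting the one-step bound on $\|\nabla\u_\eps\|_{L^r}$ produces the exponent $1+\gamma(\tfrac{4}{3}-\tfrac{2}{q})$ directly. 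Your splitting of the density is harmless but superfluous (the $L^{3\gamma-6}$ information on $\ro_\eps$ is never used here), and the ``iterative bootstrap'' you invoke is not how the exponent actually arises; without the specific Gagliardo--Nirenberg step on $\|\u_\eps\|_{L^\infty}$ it is not clear your Hölder balancing would land on the same power of $\ka$.
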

\begin{proof}
Following closely the corresponding considerations in \cite{MuPo08}, we deduce that (see \cite{MuPo14})
$$\norm{\omega_\eps^1}_{W^{1,q}(\Omega)}\leq C \norm{\Ha_1}_{W^{-1,q}(\Omega)}\text{ and } \norm{\omega_\eps^0}_{W^{1,q}(\Omega)}\leq C\norm{\u_\eps}_{W^{1,q}(\Omega)},$$
but according to \refx{estcon} for any $q$ such that $2\leq q\leq \frac{6\gamma}{6+\gamma}$ we have $$\norm{\u_\eps}_{W^{1,q}(\Omega)}\leq C ( \lnorm{p_\eps(\ro_\eps,c_\eps)}{q} 
+\lnorm{ \Delta c_\eps \nabla c_\eps}{q} +1)\leq C\bigl(1+ \ka^{\gamma(1-\frac{2}{q})}\bigr),$$
so we  concentrate on $\omega_\eps^1.$ If we denote $r=\frac{6\gamma}{6+\gamma}>3$ we get by interpolation
\begin{equation}
\begin{split}
\norm{\omega_\eps^1}_{W^{1,q}(\Omega)}\leq &C\bigl( 1 + \lnorm{K(\ro_\eps)\ro_\eps \u_\eps\nabla\u_\eps}{q} +\lnorm{\Delta c_\eps\nabla c_\eps}{q} \bigr)\\
\leq &C\bigl( 1 + \ka\lnorm{ \u_\eps}{\infty}\lnorm{\nabla\u_\eps}{q}  \bigr)\\
\leq &C\bigl( 1 + \ka\lnorm{ \u_\eps}{6}^{\frac{2r-6}{3r-6}}\lnorm{\nabla\u_\eps}{r}^{\frac{r}{3r-6}}\lnorm{\nabla\u_\eps}{2}^{\frac{2(r-q)}{q(r-2)}}\lnorm{\nabla\u_\eps}{r}^{\frac{r(q-2)}{q(r-2)}}  \bigr)\\
\leq &C\bigl( 1 + \ka\lnorm{\nabla\u_\eps}{r}^{\frac{r}{3r-6}+\frac{r(q-2)}{q(r-2)}}  \bigr)
\leq C\bigl( 1 + \ka^{1+\frac{4rq-6r}{3q(r-2)} \gamma (1-\frac{2}{r})}  \bigr)\leq C\bigl( 1 + \ka^{1+ \gamma (\frac{4}{3}-\frac{2}{q})}  \bigr).
\end{split}
\end{equation}
Finally, for the last part we get for $q\leq2$
\begin{equation*}
\lnorm{\omega_\eps^2}{q}\leq C\norm{\eps\Delta\ro_\eps \u_\eps}_{W^{-1,q}(\Omega)}\leq C\eps\bigl( \lnorm{\nabla\ro_\eps\u_\eps}{q} + \lnorm{\nabla\ro_\eps\nabla\u_\eps}{\frac{6}{5}} \bigr)
\leq C(\ka)\eps^{1/2}.\qedhere
\end{equation*}
\end{proof}

Now, we are approaching the key definition of the effective viscous flux. Inserting the Helmholtz decomposition into the approximative momentum equation yields
\begin{multline}
\nabla\bigl( -(2\nu+\eta)\Delta\mfi_\eps  + p_\eps(\ro_\eps,c_\eps)\bigr)=\nu \Delta \curl \A + K(\ro_\eps)\ro_\eps\ef - K(\ro_\eps)\ro_\eps\u_\eps\cdot \nabla\u_\eps - \frac{1}{2}\eps\Delta\ro_\eps \u_\eps  \\ 
-\frac{1}{2}\eps h K(\ro_\eps)\u_\eps  + \eps\frac{1}{2}\ro_\eps\u_\eps
+\sol\bigl(\nabla c_\eps\otimes\nabla c_\eps-\frac{\abs{\nabla c_\eps}^2}{2}\Id\bigr) - a\nabla c_\eps \int_{0}^{\ro_\eps}t K\carka(t)\de{t},
\end{multline}
and we introduce the fundamental quantity
\begin{equation}
G_\eps= -(2\nu+\eta)\Delta\mfi  + p_\eps(\ro_\eps,c_\eps) = -(2\nu+\eta)\sol\u_\eps  + p_\eps(\ro_\eps,c_\eps). \label{EFVe}
\end{equation}
Similarly, inserting the Helmholtz decomposition into the limit momentum equation  we obtain (with usage of the fact that due to the continuity equation we have
 $\overline{K(\ro)\ro\nabla\u}\u = \overline{K(\ro)\ro}\u\cdot \nabla\u $) that
\begin{multline}
\nabla\bigl( -(2\nu+\eta)\Delta\mfi  + \overline{p(\ro,c)}\:\bigr)=\nu \Delta \curl \A + \overline{K(\ro)\ro}\ef - \overline{K(\ro)\ro}\u\cdot \nabla\u \\ 
+\sol\bigl(\nabla c\otimes\nabla c-\frac{\abs{\nabla c}^2}{2}\Id\bigr) - a\nabla c \overline{\int_{0}^{\ro}t K\carka(t)\de{t}},
\end{multline}
hence we define the limit version of effective viscous flux by
\begin{equation}
G= -(2\nu+\eta)\Delta\mfi  + \overline{p(\ro,c)} = -(2\nu+\eta)\sol\u  + \overline{p(\ro,c)}.
\end{equation}
Note we have control of $\inte{G_\eps} = \inte{p_\eps(\ro_\eps,c_\eps)\:}$ and $\inte{G} = \inte{\overline{p(\ro,c)}\:}.$ Further we state the most important features of the effective viscous flux.
\begin{lem}\label{strongG}
There exists a subsequence  such that
$$ G_\eps\to G \text{ (strongly) in }L^{2}(\Omega) ,$$
and \begin{equation} \lnorm{G}{\infty}\leq C(\zeta) \bigl( 1+\ka^{1+\frac{2\gamma}{3} + \zeta} \bigr),\text{ for any }\zeta\in\Bigl(0,\frac{\gamma-6}{3}\Bigr].\label{zetao}\end{equation}
\end{lem}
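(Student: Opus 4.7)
The approach hinges on the identity \refx{EFVe} together with the momentum equation; using $\sol\curl\A_\eps=0$ to rewrite $\nu\Delta\curl\A_\eps=-\nu\,\curl\omega_\eps$, one arrives at the working relation $\nabla G_\eps = -\nu\,\curl\omega_\eps + R_\eps$, where $R_\eps$ collects the forcing: $K(\ro_\eps)\ro_\eps\ef$, the convective term $-K(\ro_\eps)\ro_\eps\u_\eps\cdot\nabla\u_\eps$, the capillary contribution $\sol(\nabla c_\eps\otimes\nabla c_\eps - \frac{1}{2}|\nabla c_\eps|^2\Id)-a\nabla c_\eps\int_0^{\ro_\eps}sK\carka(s)\de{s}$, and three $\eps$-dependent artificial terms, of which $-\frac{1}{2}\eps\Delta\ro_\eps\u_\eps$ is the only delicate one. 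The same relation passes to the limit with $G,\omega,R$. With the mean control $\inte{G_\eps} = \inte{p_\eps(\ro_\eps,c_\eps)}\leq C$ and a Poincar\'e-type inequality, both claims of the lemma reduce to $L^q$-bounds on $R$ and $\curl\omega$.

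For the $L^\infty$-bound on $G$, I would fix $q\in(3,\frac{6\gamma}{6+\gamma}]$, a nonempty range precisely because $\gamma>6$. Lemma \ref{vort} gives $\norm{\omega}_{W^{1,q}(\Omega)}\leq C\bigl(1+\ka^{1+\gamma(4/3-2/q)}\bigr)$; the non-vortical terms in $R$ are estimated in $L^q$ by combining $\overline{K(\ro)\ro}\leq C\ka$, the concentration regularity $\lnorm{\nabla c}{\infty}+\lnorm{\Delta c}{\frac{6\gamma}{6+\gamma}}\leq C$ from \refx{estcon}, and elliptic control of $\u$ through the momentum equation. Consequently $\norm{G}_{W^{1,q}(\Omega)}\leq C\bigl(1+\ka^{1+\gamma(4/3-2/q)}\bigr)$, and Sobolev embedding $W^{1,q}\hookrightarrow L^\infty$ delivers the pointwise bound. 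Writing $\gamma(4/3-2/q)=2\gamma/3+2\gamma(1/3-1/q)$, one identifies $\zeta=2\gamma(1/3-1/q)$, which sweeps $(0,(\gamma-6)/3]$ as $q$ ranges over $(3,\frac{6\gamma}{6+\gamma}]$. For the strong convergence, the same reasoning applied to $G_\eps$ yields a uniform $W^{1,q}$-bound modulo a null sequence: by the decomposition $\omega_\eps=\omega_\eps^0+\omega_\eps^1+\omega_\eps^2$ of Lemma \ref{vort}, the first two summands are uniformly bounded in $W^{1,q}$, while $\omega_\eps^2$ is $O(\eps^{1/2})$ in $L^q$ for $q\leq 2$; the remaining $\eps$-factored terms in $R_\eps$ are visibly negligible. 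Rellich compactness $W^{1,q}\hookrightarrow\hookrightarrow L^2$ (valid already for $q>6/5$) then gives precompactness of $\{G_\eps\}$ in $L^2$, and the already established weak convergence $G_\eps\weak G$ in $L^2$ (coming from $\sol\u_\eps\weak\sol\u$ and $p_\eps(\ro_\eps,c_\eps)\weaks\overline{p(\ro,c)}$) identifies the strong limit as $G$.

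The main obstacle is the artificial viscosity term $-\frac{1}{2}\eps\Delta\ro_\eps\u_\eps$, which is not pointwise small since only the weighted control $\sqrt{\eps}\lnorm{\nabla\ro_\eps}{2}\leq C$ is available. The rescue is structural: by the very definition of $\omega_\eps^2$ through $\Ha_2$ in \refx{omeg}, the rotational part of this vector field is exactly absorbed by $-\nu\curl\omega_\eps^2$, so only its gradient part actually enters $\nabla G_\eps$. That gradient part can be shown to vanish in $W^{-1,q}$ by substituting the approximative continuity equation \refx{epscont} to express $\eps\Delta\ro_\eps\u_\eps = \eps\ro_\eps\u_\eps - \eps K(\ro_\eps)h\u_\eps + \sol(K(\ro_\eps)\ro_\eps\u_\eps)\u_\eps$, each piece of which becomes manageable thanks to the Neumann boundary \refx{BCeps} and the uniform $L^\infty$-bound on $K(\ro_\eps)\ro_\eps$. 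A secondary subtlety is the sharp tracking of the $\ka$-exponent: Lemma \ref{vort} must be used right up to its endpoint $q=\frac{6\gamma}{6+\gamma}$ in order to recover the full interval $\zeta\in(0,(\gamma-6)/3]$ claimed in \refx{zetao}.
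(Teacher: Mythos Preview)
Your overall strategy matches the paper's: split $G_\eps$ into a part uniformly bounded in $W^{1,q}(\Omega)$ for some $q>3$ (hence precompact in $L^2$ by Rellich, and in $L^\infty$ by Sobolev), plus an $\eps$-small remainder, and read off the $\ka$-exponent from Lemma~\ref{vort}. Your derivation of the $L^\infty$ bound and the identification $\zeta=2\gamma(1/3-1/q)$ are correct and coincide with the paper's computation.

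The gap is in your treatment of the delicate term $-\tfrac12\eps\Delta\ro_\eps\u_\eps$. Your continuity-equation substitution produces
\[
\eps\Delta\ro_\eps\,\u_\eps=\eps\ro_\eps\u_\eps-\eps K(\ro_\eps)h\,\u_\eps+\sol\bigl(K(\ro_\eps)\ro_\eps\u_\eps\bigr)\u_\eps,
\]
and the last summand is \emph{not} $o(1)$ in any norm: writing $\sol(K\ro_\eps\u_\eps)\u_\eps=\sol(K\ro_\eps\u_\eps\otimes\u_\eps)-K\ro_\eps\u_\eps\cdot\nabla\u_\eps$ shows it is uniformly bounded in $W^{-1,q}$, but it carries no factor of $\eps$ and converges to $\sol(\overline{K(\ro)\ro}\,\u)\u=0$ only weakly. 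That is not enough to conclude strong $L^2$ convergence of the corresponding piece of $G_\eps$; weak $W^{-1,q}$ convergence of $\nabla G_\eps^2$ to zero does not upgrade to $\lnorm{G_\eps^2}{2}\to0$. The paper sidesteps this entirely: it groups $-\tfrac12\eps\Delta\ro_\eps\u_\eps-\nu\curl\omega_\eps^2$ as $\nabla G_\eps^2$ and bounds the first contribution directly by one integration by parts,
\[
\norm{\eps\Delta\ro_\eps\u_\eps}_{W^{-1,q}(\Omega)}\leq C\eps\bigl(\lnorm{\nabla\ro_\eps\otimes\u_\eps}{q}+\lnorm{\nabla\ro_\eps\cdot\nabla\u_\eps}{6/5}\bigr)\leq C(\ka)\,\eps^{1/2}\quad(q\leq2),
\]
using only $\sqrt{\eps}\lnorm{\nabla\ro_\eps}{2}\leq C$ from \refx{epsest}. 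This is both simpler than your route and yields the quantitative decay you need. Replace your substitution argument by this direct $W^{-1,q}$ estimate and the proof goes through.
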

\begin{proof}
Let us decompose $G_\eps$ to $G_\eps=G_\eps^1+G_\eps^2$, where $G_\eps^2$ contains the "strongly $\eps$-dependent" terms of the right hand side of \refx{EFVe}, namely
$\nabla G_\eps^2 =-\eps\frac{1}{2}\Delta \ro_\eps \u_\eps - \nu \curl \omega_\eps^{2} $ with $\inte{G_\eps^2}=0,$ so
$$ \lnorm{G_\eps^2}{q}\leq C(\eps \norm{\Delta\ro_\eps\u_\eps}_{W^{-1,q}(\Omega)} +\nu \norm{ \curl \omega_\eps^{2} }_{W^{-1,q}(\Omega)}) \leq C(\ka)\eps^{1/2},\text{ for }q\in[1,2].$$
Using once more the estimates from Lemma \ref{vort}, we observe that $\abs{\int_\Omega{G_\eps}\dx}\leq C $  and \begin{equation}
\norm{ G_\eps^1  }_{W^{1,q}(\Omega)}\leq C\bigl( 1 + \ka^{1+ \gamma (\frac{4}{3}-\frac{2}{q})}  \bigr),\:\text{ for } q\in \left[2,\frac{6\gamma}{6+\gamma}\right] .\label{zeta}
\end{equation}
Therefore, since $\gamma>6$ we have, at least for a suitably chosen subsequence
$$ G_\eps^1 \to G^{1} \text{ (strongly) in }L^{\infty}(\Omega)\:\text{ and }\: G_\eps^2 \to 0 \text{ (strongly) in }L^{2}(\Omega).$$
Thus, $$G_\eps =G_\eps^1+G_\eps^2 \to G^1 = G \text{ (strongly)  in }L^{q}(\Omega),\text{ for }q\in[1,2].$$
Finally, setting $q = 3+  \frac{3\zeta}{2\gamma-3\zeta}$ in \refx{zeta} we get the desired conclusion
$$  \lnorm{G}{\infty}\leq C(q)\norm{ G  }_{W^{1,q}(\Omega)}\leq C(\zeta)\bigl(1+\ka^{1+\frac{2\gamma}{3} + \zeta} \bigr) .\qedhere $$
\end{proof}
Now, we are ready to show that we are able to choose $\ka$ in such a way that actually $\overline{K(\ro)\ro} = \ro$ a.e. in $\Omega.$ This will be an immediate consequence of the following lemma.

\begin{lem}\label{kao}
There exists $\ka_0$ such that 
\begin{equation}
\frac{\ka-3}{\ka}(\ka-3)^\gamma - \lnorm{G}{\infty}\geq 1  \text{ for }\ka>\ka_0,\label{ka}
\end{equation}
and at least for a subsequence
$ \lim\limits_{\eps\to 0}\abs{\{ \ro_\eps>\ka-3\}}=0.$
\end{lem}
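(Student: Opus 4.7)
The statement has two components: a growth inequality \refx{ka} for $\ka$ and a measure estimate on the level set $A_\eps := \{\ro_\eps > \ka-3\}$. For the first, I would invoke Lemma \ref{strongG}: fixing any $\zeta \in (0,\frac{\gamma-6}{3}]$ gives $\lnorm{G}{\infty} \leq C(\zeta)(1+\ka^{1+\frac{2\gamma}{3}+\zeta})$ with exponent $1+\frac{2\gamma}{3}+\zeta \leq \gamma-1$. Since the left-hand side of \refx{ka} grows like $\ka^\gamma$ for large $\ka$, it eventually dominates the right-hand side plus $1$, yielding \refx{ka} for $\ka \geq \ka_0$ for some $\ka_0$ depending on $\zeta$ and the constants.

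The measure estimate will rest on combining the effective viscous flux identity $(2\nu+\eta)\sol\u_\eps = p_\eps(\ro_\eps,c_\eps) - G_\eps$ with a renormalized form of the approximate continuity equation \refx{epscont}. I test \refx{epscont} against $b'(\ro_\eps)$, where $b$ is a smooth convex non-decreasing approximation of $(\ro-(\ka-3))_+$, so that $b' \to \chi_{\{\ro > \ka-3\}}$ and $b'' \geq 0$. Setting $B(\ro) = \int_0^\ro b'(s)(K(s)s)' \de{s}$, integrating and using the boundary conditions $\u_\eps\cdot\en = 0$ and $\nabla\ro_\eps\cdot\en = 0$ to kill boundary contributions, while discarding the non-positive $-\eps\inte{b''(\ro_\eps)\abs{\nabla\ro_\eps}^2}$, produces
\begin{equation*}
\inte{[b'(\ro_\eps)K(\ro_\eps)\ro_\eps - B(\ro_\eps)]\sol\u_\eps} \leq \eps\inte{[hK(\ro_\eps) - \ro_\eps]b'(\ro_\eps)}.
\end{equation*}
A direct computation using $K \equiv 1$ on $[0,\ka-1]$ (in particular $K(\ka-3)=1$) shows that $b'(\ro)K(\ro)\ro - B(\ro) \to (\ka-3)\chi_{\{\ro > \ka-3\}}$ in the limit of exact $b' = \chi_{\{\ro>\ka-3\}}$, so passing to the limit in the smoothing gives
\begin{equation*}
(\ka-3)\int_{A_\eps}\sol\u_\eps \leq \eps(h-(\ka-3))\abs{A_\eps} \leq 0
\end{equation*}
once $\ka > h+3$. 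Substituting the effective viscous flux identity then yields $\int_{A_\eps} p_\eps(\ro_\eps,c_\eps) \leq \int_{A_\eps} G_\eps$.

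On $A_\eps$ the pressure is bounded below by $(\ka-3)^\gamma$: monotonicity of $P_b$ together with $K \equiv 1$ on $[0,\ka-1]$ gives $P_b(\ro_\eps) \geq P_b(\ka-3) = (\ka-3)^\gamma$, and the remaining term $(ac_\eps + d)\int_0^{\ro_\eps} K(s)\partial_s(s^2 l_\eps'(s))\de{s}$ is non-negative by \refx{logar} together with $a,d\geq 0$ and $c_\eps \in [0,1]$. Invoking the strong $L^2$-convergence $G_\eps \to G$ from Lemma \ref{strongG} and a Cauchy--Schwarz estimate on $\int_{A_\eps}(G_\eps - G)$ delivers
\begin{equation*}
((\ka-3)^\gamma - \lnorm{G}{\infty})\abs{A_\eps} \leq \abs{\Omega}^{1/2}\lnorm{G_\eps - G}{2}.
\end{equation*}
By \refx{ka} the factor in parentheses is at least $1$, while the right-hand side vanishes along the convergent subsequence, and hence $\abs{A_\eps} \to 0$.

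The main subtlety I anticipate is the rigorous treatment of the non-smooth test function $b$: a direct choice $b' = \chi_{\{\ro > \ka-3\}}$ makes $b''$ a positive measure concentrated on the level set $\{\ro_\eps = \ka-3\}$, so I would work with a smooth monotone family $b_\delta$ with $b_\delta'' \geq 0$ and pass $\delta \to 0$ by dominated convergence on both sides of the inequality. The non-negativity of $b_\delta''$ is precisely what licenses discarding the corresponding term with the correct direction of the inequality, and every other quantity involved is already uniformly controlled by the approximate estimates \refx{epsest}.
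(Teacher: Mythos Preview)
Your proof is correct and proceeds along the same underlying idea as the paper --- renormalize the approximate continuity equation, combine with the effective viscous flux identity $(2\nu+\eta)\sol\u_\eps=p_\eps-G_\eps$, and use the pressure lower bound together with the strong $L^2$ convergence $G_\eps\to G$ --- but the implementation differs in a genuine way. The paper tests \refx{epscont} by $N^l(\ro_\eps)$, where $N$ is a smooth non-increasing cutoff equal to $1$ on $[0,\ka-3]$ and $0$ on $[\ka-2,\infty)$; this produces the weight $\int_0^{\ro_\eps}tlN^{l-1}N'\,\de{t}$, which lies between $-(\ka-2)(1-N^l)$ and $-(\ka-3)(1-N^l)$, and is the source of the factor $\frac{\ka-3}{\ka}$ in \refx{ka}. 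The residual $\lnorm{N^l(\ro_\eps)}{2}$ on $\{\ro_\eps>\ka-3\}$ is then eliminated by sending $l\to\infty$. Your choice $b'_\delta\to\chi_{\{\ro>\ka-3\}}$ collapses the weight exactly to $(\ka-3)\chi_{\{\ro>\ka-3\}}$, so no $\frac{\ka-3}{\ka}$ appears and no secondary limit $l\to\infty$ is needed; the only limiting procedure is the smoothing $\delta\to0$, which is handled by dominated convergence exactly as you indicate. Either route works; yours is marginally cleaner (your final inequality uses only $(\ka-3)^\gamma-\lnorm{G}{\infty}\geq 1$, which follows from \refx{ka}), while the paper's choice of $N$ vanishing on $[\ka-2,\infty)$ has the minor convenience that $K\equiv1$ on $\supp N'$ and all integrals are manifestly over a compact range of $\ro_\eps$. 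One small remark: your sign argument on the right-hand side requires $\ka_0>h+3$, which you should record explicitly, though of course this is subsumed in the growth comparison you already make for \refx{ka}.
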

\begin{proof}
Let us define a smooth non-increasing function $N: [0,\infty)\to [0,1]$ such that
$$ N(t)=\begin{cases}
1&\text{ for }t\in[0,\ka-3],\\
\in[0,1]&\text{ for }t\in(\ka-3,\ka-2),\\
0&\text{ for }t\in[\ka-2,\infty),\\
\end{cases}$$
and multiply the approximative continuity equation by $N^l(\ro_\eps)$, for some suitable power $l\in\N$ in order to get after some manipulations
\begin{equation}
\inte{\int\limits_0^{\ro_\eps}  t l N^{l-1}(t)N\carka(t) \de{t}\sol \u_\eps }\geq R_\eps
\end{equation}
with $R_\eps = \eps \inte { N^{l}(\ro_\eps)\Delta\ro_\eps  + (h-\ro_\eps) N^{l}(\ro_\eps)}$, $R_\eps\to0$ as $\eps\to 0 $.\footnote{ 
Note that $-\eps l \inte{ N^{l-1}(\ro_\eps)N\carka(\ro_\eps)\abs{\nabla\ro_\eps}^2}\geq 0 $}
Further by definition of $G_\eps$ we have
\begin{multline}
-(\ka-3)\inte{ \biggl(\int\limits_0^{\ro_\eps} l N^{l-1}(t) N\carka(t) \de{t} \biggr) p_\eps(\ro_\eps,c_\eps)}\leq 
\ka \abs{\inte{  \biggl(\int\limits_0^{\ro_\eps} -l N^{l-1}(t) N\carka(t) \de{t} \biggr) G_\eps }} + R_\eps,
\end{multline}
and
\begin{equation}
\frac{\ka-3}{\ka} \int\limits_{\{\ro_\eps>\ka-3\}} \bigl(1-N^{l}(\ro_\eps)\bigr)p_\eps(\ro_\eps,c_\eps)\dx\leq \int\limits_{\{\ro_\eps>\ka-3\}} \bigl(1-N^{l}(\ro_\eps)\bigr) \abs{G_\eps}\dx + \abs{R_\eps}.
\end{equation}
Recalling the structure of the pressure we have according to \refx{logar} and the fact that $c_\eps\in[0,1]$
 \begin{equation}p_\eps(\ro_\eps,c_\eps)= P_b(\ro_\eps) +(ac_\eps+d)\int_0^{\ro_\eps} K(t) \det{\bigl(t^{2}l\carka_\eps(t)\bigr)}\:\de{t}\geq P_b(\ro_\eps), \label{press}
 \end{equation} 
 which yields \begin{multline}
\frac{\ka-3}{\ka}(\ka-3)^\gamma\abs{\{\ro_\eps>\ka-3\}}- \frac{\ka-3}{\ka} \lnorm{p_\eps(\ro_\eps,c_\eps)}{2}\lnorm{N^l(\ro_\eps)}{2}\\
\leq \lnorm{G}{\infty} \abs{\{\ro_\eps>\ka-3\}} +\lnorm{G-G_\eps}{1} + \abs{R_\eps}.
\end{multline}
According to \refx{zetao} we are able to choose $\ka_0$ satisfying \refx{ka}, yielding 
\begin{equation}
\abs{\{\ro_\eps>\ka-3\}}\leq C \bigl( \norm{N^l(\ro_\eps)}_{L^{2}(\{\ro_\eps>\ka-3\})} +\lnorm{G-G_\eps}{1} + \abs{R_\eps} \bigr).
\end{equation}
However, the last two terms tend to zero as $\eps\to 0+$ and having fixed $\eps>0$ $\norm{N^l(\ro_\eps)}_{L^{2}(\{\ro_\eps>\ka-3\})}$ tends to zero as $l\to+\infty$ as well. Thus, Lemma \ref{kao} is proved.
\end{proof}

Finally, we  deduce the pointwise convergence of the densities. Our main aim is to show that 
$$\overline{P_b(\ro)\ro} =\overline{P_b(\ro)} \ro, $$ which will further lead to $\ro_\eps\to\ro$ strongly in $L^{q}(\Omega)$ for $q<\infty.$
\begin{lem}
The weak limits satisfy
\begin{equation}\inte{\overline{p(\ro,c)\ro}}\leq \inte{G\ro}.\label{one}\end{equation}
\end{lem}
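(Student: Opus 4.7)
The plan is to derive an identity expressing $\overline{p(\ro,c)\ro}$ in terms of $G\ro$ and $\overline{\ro\sol\u}$ using the definition of the effective viscous flux, and then to show that the last weak-limit quantity is nonpositive by a logarithmic test of the approximate continuity equation.

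\textbf{Step 1 (identity from the effective viscous flux).} Definition \refx{EFVe} yields the pointwise identity $p_\epsil(\ro_\epsil,c_\epsil)=G_\epsil+(2\nu+\eta)\sol\u_\epsil$. Multiply by $\ro_\epsil$ and integrate over $\Omega$ to obtain
\begin{equation*}
\inte{p_\epsil(\ro_\epsil,c_\epsil)\ro_\epsil}=\inte{G_\epsil\ro_\epsil}+(2\nu+\eta)\inte{\ro_\epsil\sol\u_\epsil}.
\end{equation*}
Now pass to the limit $\epsil\to 0$. The left-hand side converges to $\inte{\overline{p(\ro,c)\ro}}$ by definition of the weak limit. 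For the first term on the right, Lemma \ref{strongG} provides strong convergence of $G_\epsil$ to $G$ in $L^2(\Omega)$, which against the weak-$*$ limit $\ro_\epsil\weaks\ro$ in $L^\infty(\Omega)$ gives $\inte{G_\epsil\ro_\epsil}\to\inte{G\ro}$. Denoting the weak limit of $\ro_\epsil\sol\u_\epsil$ by $\overline{\ro\sol\u}$, the identity reduces the lemma to proving
\begin{equation*}
\inte{\overline{\ro\sol\u}}\leq 0.
\end{equation*}

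\textbf{Step 2 (logarithmic test of \refx{epscont}).} Test the approximate continuity equation by $\log(\ro_\epsil+\epsil)$. The convection term, after integration by parts using $\u_\epsil\cdot\en=0$, gives $\inte{Q_\epsil(\ro_\epsil)\sol\u_\epsil}$ with $Q_\epsil(s)=\int_0^s\frac{K(t)t}{t+\epsil}\,\de{t}$. The diffusion term, after integration by parts using $\nabla\ro_\epsil\cdot\en=0$, equals $-\epsil\inte{|\nabla\ro_\epsil|^2/(\ro_\epsil+\epsil)}\leq 0$. The remaining two $\epsil$-terms are bounded by $C\epsil|\log\epsil|$ because $\ro_\epsil,K(\ro_\epsil)\in[0,\ka]$ and $|\log(s+\epsil)|\leq|\log\epsil|+\log(\ka+1)$ on $[0,\ka]$. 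Discarding the nonpositive dissipation contribution,
\begin{equation*}
\inte{Q_\epsil(\ro_\epsil)\sol\u_\epsil}\leq C\epsil|\log\epsil|\xrightarrow{\epsil\to 0}0.
\end{equation*}

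\textbf{Step 3 (identification of the weak limit).} Set $A(s)=\int_0^s K(t)\,\de{t}$. A direct calculation gives $|A(s)-Q_\epsil(s)|\leq\epsil\log(1+\ka/\epsil)\to 0$ uniformly on $[0,\ka]$. Moreover $A(s)=s$ on $[0,\ka-1]$ and $|A(s)-s|\leq 1$ for $s\in[\ka-1,\ka]$, so by Lemma \ref{kao} the function $A(\ro_\epsil)-\ro_\epsil$ vanishes outside the set $\{\ro_\epsil>\ka-1\}\subseteq\{\ro_\epsil>\ka-3\}$, whose measure tends to zero; hence $A(\ro_\epsil)-\ro_\epsil\to 0$ strongly in every $L^p(\Omega)$, $p<\infty$. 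Combining, $Q_\epsil(\ro_\epsil)-\ro_\epsil\to 0$ strongly in $L^p(\Omega)$, which paired with the weak convergence $\sol\u_\epsil\weak\sol\u$ in $L^2(\Omega)$ yields $\inte{Q_\epsil(\ro_\epsil)\sol\u_\epsil}-\inte{\ro_\epsil\sol\u_\epsil}\to 0$. Therefore
\begin{equation*}
\inte{\overline{\ro\sol\u}}=\lim_{\epsil\to 0}\inte{\ro_\epsil\sol\u_\epsil}=\lim_{\epsil\to 0}\inte{Q_\epsil(\ro_\epsil)\sol\u_\epsil}\leq 0,
\end{equation*}
and plugging this into the identity from Step 1 proves \refx{one}.

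The main obstacle is the rigorous handling of the singular logarithm: we must regularize by $\log(\ro_\epsil+\delta)$ and tie the regularization to $\epsil$. Choosing $\delta=\epsil$ succeeds because on the bounded range $[0,\ka]$ the logarithmic blow-up is only of size $|\log\epsil|$, which is absorbed by the vanishing factor $\epsil$; at the same time this choice keeps the deviation $\|Q_\epsil-A\|_\infty$ going to zero, which is essential for identifying the weak limit with $\overline{\ro\sol\u}$ via Lemma \ref{kao}.
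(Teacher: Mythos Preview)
Your proof is correct and follows essentially the same route as the paper: test the approximate continuity equation by a regularized logarithm of the density, use the sign of the dissipation term, invoke Lemma \ref{kao} to pass from $\int_0^{\ro_\eps}K(t)\,\de t$ to $\ro_\eps$, and combine with the strong convergence of $G_\eps$ from Lemma \ref{strongG}. The only cosmetic difference is that the paper uses the test function $\log\ka-\log(\ro_\eps+\delta)$ and first sends $\delta\to0$ before $\eps\to0$, whereas you tie $\delta=\eps$ and handle both limits at once; this is a harmless variant.
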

\begin{proof}
Testing the approximative continuity equation by $\log\ka -\log (\ro_\eps + \delta)$ and taking the limit for $\delta\to0+,$ we get (see \cite{MuPo06})
$$\inte{K(\ro_\eps)\u_\eps\cdot\nabla\ro_\eps}\geq \eps C(\ka)$$
and by Lemma \ref{kao} $ \inte{\ro_\eps\sol\u_\eps}\leq R_\eps,$
with $R_\eps\to 0 $ as $\eps\to 0+.$ Hence the definition of $G$ yields 
$$\inte{p_\eps(\ro_\eps,c_\eps)\ro_\eps} \leq \inte{G_\eps\ro_\eps} - (2\nu + \eta)R_\eps$$
 and passing to the limit with $\eps$ we get \refx{one}, since according to Lemma \ref{strongG} $ \overline{G\ro}=G\ro.$
\end{proof}

\begin{lem}
\begin{equation}
\inte{\overline{p(\ro,c)}\ro}= \inte{G\ro}.\label{two}
\end{equation}
\end{lem}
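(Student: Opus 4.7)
The plan is to reduce the identity \refx{two} to the statement
$$\inte{\ro\sol\u}=0,$$
which, combined with the definition $G = -(2\nu+\eta)\sol\u + \overline{p(\ro,c)}$, yields \refx{two} at once since $\inte{G\ro} = -(2\nu+\eta)\inte{\ro\sol\u} + \inte{\overline{p(\ro,c)}\ro}$. To set up this reduction, the first step uses Lemma \ref{kao} together with the structure of the cut-off $K$: on $\{\ro_\eps \leq \ka-3\}$ one has $K(\ro_\eps)=1$, while the complementary set has measure tending to zero; hence, after passing to a suitable subsequence, $\overline{K(\ro)\ro}=\ro$ almost everywhere, and the limit continuity equation reduces to $\sol(\ro\u)=0$ in $\mathcal{D}'(\Omega)$. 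Moreover $\ro\in L^\infty(\Omega)$, $\u\in W^{1,q}(\Omega)$ for every finite $q$, and $\u\cdot\en=0$ on $\partial\Omega$.

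The second step invokes the DiPerna--Lions renormalization theorem, whose hypotheses are comfortably met by the above regularity. It yields, for every $b\in C^1([0,\infty))$ of moderate growth,
$$\sol\bigl(b(\ro)\u\bigr)+\bigl(\ro b\carka(\ro)-b(\ro)\bigr)\sol\u = 0\quad\text{in }\mathcal{D}'(\Omega).$$
Choosing $b(\ro)=\ro\log\ro$ (with the convention $b(0)=0$), one computes $\ro b\carka(\ro)-b(\ro)=\ro$, so
$$\sol(\ro\log\ro\cdot\u)+\ro\sol\u=0.$$
Integrating over $\Omega$ and using $\u\cdot\en=0$ to annihilate the divergence term produces the desired identity $\inte{\ro\sol\u}=0$.

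The main obstacle is justifying the renormalized equation at the limit level. Given $\ro\in L^\infty(\Omega)$ and $\u\in W^{1,q}(\Omega)$ for every $q<\infty$, the classical Friedrichs-mollifier/commutator argument of DiPerna and Lions applies without complications, and the boundary condition $\u\cdot\en=0$ lets us extend $\ro\u$ by zero across $\partial\Omega$ if needed. An alternative route, closer in spirit to the proof of the preceding lemma, would be to test the approximate continuity equation by $\log(\ro_\eps+\delta)$ in place of $\log\ka-\log(\ro_\eps+\delta)$ and pass to the limits $\delta\to 0+$ and then $\eps\to 0+$ exploiting the Neumann condition $\nabla\ro_\eps\cdot\en=0$ and the bound $\eps\lnorm{\nabla\ro_\eps}{2}^2\leq C$ to kill the regularizing terms; this would give an approximate-level identity $\inte{\ro_\eps\sol\u_\eps}\to 0$, but passing from this to $\inte{\ro\sol\u}=0$ would require additional weak-convergence information (since $\ro_\eps$ a priori converges only weakly), making the limit renormalization approach the cleaner option.
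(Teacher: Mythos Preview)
Your proposal is correct and follows essentially the same route as the paper: both reduce \refx{two} to $\inte{\ro\sol\u}=0$ via the definition of $G$, and both obtain this identity by renormalizing the limit continuity equation through the Friedrichs/DiPerna--Lions commutator argument, relying on $\ro\in L^\infty$, $\u\in W^{1,q}$ and $\u\cdot\en=0$. The only cosmetic difference is the choice of renormalization: the paper tests by $\log(\ro_n+\delta)-\log\delta$ to first get $\inte{\u\cdot\nabla\ro}=0$ and then combines it with $\inte{\sol(\ro\u)}=0$, whereas you take $b(\ro)=\ro\log\ro$ and reach $\inte{\ro\sol\u}=0$ in a single step.
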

\begin{proof}
First, with the usage of Fridrichs' commutator lemma we are able to approximate $\ro$ by smooth bounded functions $\ro_n$ and deduce that $$\inte{\ro\sol\u + \u\cdot\nabla \ro} = 0.$$ Further 
testing continuity equation by $\log(\ro_n+\delta)-\log\delta$ and then passing to the limit at first with $n\to\infty$ and then with $\delta\to 0+$ gives us $\inte{\u\cdot\nabla\ro}=0,$ hence
 $\inte{\ro\sol\u}=0$ as well. Thus, using again the properties of $G$ we can conclude \refx{two}.

\end{proof}
Further, the strong convergence of $c$ and the convexity of $s\to s^{\gamma}$ and $s \to s^{2}l\carka_\eps(s) $ gives us $$ \overline{p(\ro,c)}\ro \leq \overline{p(\ro,c)\ro},$$ which combined 
with \refx{one} and \refx{two} yields
\begin{equation*}
\overline{p(\ro,c)\ro} =\overline{p(\ro,c)} \ro\: \text{ a.e. in }\Omega.
\end{equation*}
Thus, $\overline{\ro^{\gamma+1}} = \ro\overline{\ro^{\gamma}}$ and $\ro_\eps\to\ro$ strongly in $L^{\gamma}(\Omega).$

Now, we move our attention to the last two equations of system \refx{RK}-\refx{system} and show that due to strong convergence $\ro_\eps$ and $c_\eps$ all the remaining nonlinearities 
can be identified, so we have indeed obtained the solutions to our original system. This completes the proof of our main theorem.

\vspace*{0.35cm}

\begin{note}\label{smaller} In order to prove the existence of only weak solutions of the system under consideration for $6\geq\gamma>3$ we can use the following idea. We modify the
 pressure  $p_\delta(\ro,c) = p(\ro,c) + \delta \ro^\Gamma$ with $\Gamma>6$, so we can apply the already proven result and then using the a priori estimates derived in the heuristic 
approach pass to the limit with $\delta\to0+.$ This limit passage can be performed in the same spirit as in the case of the Navier-Stokes system, using nowadays "standard" techniques 
due to Lions, and Feireisl, see e.g.\cite{NoPo11} with $\theta$ replaced by $c$. The compactness of the additional stress in the momentum equation and in the additional equations is 
just easy application of the Rellich-Kondrachov compactness theorem due to the uniform bound of $\Delta c$.
\end{note}
\vspace*{0.35cm}

\textbf{Acknowledgments:} The work on this paper was conducted during the first author's internship at the Warsaw Center of Mathematics and Computer Science. 
The work of the first author was supported by the grant SVV-2014-267316. The second author was partly supported by the MN grant IdP2011 000661.

\providecommand{\bysame}{\leavevmode\hbox to3em{\hrulefill}\thinspace}
\providecommand{\MR}{\relax\ifhmode\unskip\space\fi MR }
\providecommand{\MRhref}[2]{%
  \href{http://www.ams.org/mathscinet-getitem?mr=#1}{#2}
}
\providecommand{\href}[2]{#2}

\end{document}